\documentclass{amsart}
\usepackage{amsmath}
\usepackage{amsthm}
\usepackage{graphicx}
\usepackage{color}
\usepackage{amssymb}
\usepackage{mathrsfs}
\usepackage{url}
\usepackage{mathtools}

\newcommand{\re}{\mathbb{R}}
\newcommand{\cpx}{\mathbb{C}}

\newcommand{\N}{\mathbb{N}}

\renewcommand{\P}{\mathbb{P}}

\newcommand{\diag}{\mbox{diag}}

\newcommand{\lmd}{\lambda}

\newcommand{\eps}{\epsilon}

\newcommand{\dt}{\delta}
\newcommand{\Dt}{\Delta}
\newcommand{\tx}{\tilde{x}}
\newcommand{\ty}{\tilde{y}}
\def\bx{\bar{x}}
\def\by{\bar{y}}
\newcommand{\tg}{\tilde{g}}
\renewcommand{\th}{\tilde{h}}
\def\af{\alpha}

\def\rank{\mbox{rank}}

\newcommand{\sig}{\sigma}
\newcommand{\Sig}{\Sigma}

\newcommand{\reff}[1]{(\ref{#1})}
\newcommand{\pt}{\partial}
\newcommand{\prm}{\prime}

\newcommand{\mc}[1]{\mathcal{#1}}

\newcommand{\cv}[1]{\mbox{conv}(#1)}

\newcommand{\bdes}{\begin{description}}
\newcommand{\edes}{\end{description}}

\newcommand{\bal}{\begin{align}}
\newcommand{\eal}{\end{align}}

\newcommand{\bnum}{\begin{enumerate}}
\newcommand{\enum}{\end{enumerate}}

\newcommand{\bit}{\begin{itemize}}
\newcommand{\eit}{\end{itemize}}

\newcommand{\bea}{\begin{eqnarray}}
\newcommand{\eea}{\end{eqnarray}}
\newcommand{\be}{\begin{equation}}
\newcommand{\ee}{\end{equation}}

\newcommand{\baray}{\begin{array}}
\newcommand{\earay}{\end{array}}

\newcommand{\bsry}{\begin{subarray}}
\newcommand{\esry}{\end{subarray}}

\newcommand{\bca}{\begin{cases}}
\newcommand{\eca}{\end{cases}}

\newcommand{\bcen}{\begin{center}}
\newcommand{\ecen}{\end{center}}

\newcommand{\bbm}{\begin{bmatrix}}
\newcommand{\ebm}{\end{bmatrix}}

\newcommand{\bmx}{\begin{matrix}}
\newcommand{\emx}{\end{matrix}}

\newcommand{\bpm}{\begin{pmatrix}}
\newcommand{\epm}{\end{pmatrix}}

\newcommand{\btab}{\begin{tabular}}
\newcommand{\etab}{\end{tabular}}

\newtheorem{theorem}{Theorem}[section]

\newtheorem{prop}[theorem]{Proposition}

\newtheorem{cond}[theorem]{Condition}
\theoremstyle{definition}

\newtheorem{example}[theorem]{Example}
\newtheorem{exm}[theorem]{Example}
\newtheorem{question}[theorem]{Question}
\newtheorem{alg}[theorem]{Algorithm}

% set up the counters in sections

\setcounter{equation}{0}
\setcounter{subsection}{0}
\numberwithin{equation}{section}

\begin{document}

\title[The Saddle Point Problem of Polynomials]
{The Saddle Point Problem of Polynomials}

\author{Jiawang Nie, Zi Yang}
\address{Department of Mathematics,
University of California San Diego,
9500 Gilman Drive, La Jolla, CA, USA, 92093.}
\email{njw@math.ucsd.edu, ziy109@ucsd.edu}

\author{Guangming Zhou}
\address{School of Mathematics and Computational Science
Xiangtan University,
Xiangtan, Hunan, 411105, China.}
\email{zhougm@xtu.edu.cn}

\subjclass[2010]{90C22,90C47,49K35,65K05}

\date{}

\keywords{saddle point, polynomial, nonsingularity,
Lasserre relaxation, semidefinite program}

\begin{abstract}
This paper studies the saddle point problem of polynomials.
We give an algorithm for computing saddle points.
It is based on solving Lasserre's hierarchy of semidefinite relaxations.
Under some genericity assumptions on defining polynomials,
we show that: i) if there exists a saddle point,
our algorithm can get one by solving a finite hierarchy of
Lasserre type semidefinite relaxations;
ii) if there is no saddle point,
our algorithm can detect its nonexistence. 
\end{abstract}

\maketitle

\section{Introduction}

Let $X \subseteq \re^n,  Y \subseteq \re^m$ be two sets (for dimensions $n,m>0$),
and let $F(x,y)$ be a continuous function in
$(x,y) \in X \times Y$.
A pair $(x^*,y^*) \in X \times Y$ is said to be a saddle point
of $F(x,y)$ over $X \times Y$ if
\be \label{dfsad:F:XY}
F(x^*,y) \, \le \, F(x^*,y^*)  \,\le \, F(x,y^*) \quad
\forall \, x \in X, \, \forall \, y \in Y.
\ee
The above implies that
\[
 F(x^*,y^*) \,=\,  \min_{x\in X} F(x,y^*)
\leq \max_{y\in Y} \min_{x\in X} F(x,y),
\]
\[
F(x^*,y^*) \,=\,  \max_{y\in Y} F(x^*,y)
\geq \min_{x\in X} \max_{y\in Y} F(x,y).
\]
On the other hand, it always holds that
\[
 \max_{y\in Y}\min_{x\in X} F(x,y) \, \le \, \min_{x\in X}\max_{y\in Y} F(x,y).
\]
Therefore, if $(x^*,y^*)$ is a saddle point, then
\be \label{maxmin=minmax}
 \min_{x\in X}\max_{y\in Y} F(x,y) \,=\, F(x^*,y^*)
 \, = \, \max_{y\in Y}\min_{x\in X} F(x,y).
\ee
All saddle points share the same objective value,
although there may exist different saddle points.
The definition of saddle points in \reff{dfsad:F:XY} requires the inequalities
to hold for all points in the feasible sets $X,Y$.
That is, when $y$ is fixed to $y^*$, $x^*$ is a global minimizer of
$F(x,y^*)$ over $X$; when $x$ is fixed to $x^*$, $y^*$ is a global maximizer of
$F(x^*,y)$ over $Y$.
Certainly, $x^*$ must also be a local minimizer of $F(x,y^*)$
and $y^*$ must be a local maximizer of $F(x^*,y)$.
So, the local optimality conditions can be applied at $(x^*,y^*)$.
However, they are not sufficient to guarantee that $(x^*,y^*)$ is a saddle point,
since \reff{dfsad:F:XY} needs to be satisfied for all feasible points.

The saddle point problem of polynomials (SPPP) is for cases that $F(x,y)$
is a polynomial function in $(x,y)$ and $X,Y$ are semialgebraic sets, i.e.,
they are described by polynomial equalities and/or inequalities.
The SPPP concerns the existence of saddle points and
the computation of them if they exist.
When $F$ is convex-concave in $(x,y)$ and $X,Y$
are nonempty compact convex sets,
there exists a saddle point. We refer to \cite[\S2.6]{BNO03}
for the classical theory for convex-concave type saddle point problems.
The SPPPs have broad applications.
They are of fundamental importance
in duality theory for constrained optimization,
min-max optimization and game theory \cite{BNO03,LarLas12,ShahPar07}.
The following are some applications.
\begin{itemize}

\item  Zero sum games can be formulated as
saddle point problems \cite{bac98,Nash,Rat13}.
%The Nash equilibrium of a zero sum game is a saddle point.
%An explicit example is presented in \reff{ex:zero sum}.
In a zero sum game with two players,
suppose the first player has the strategy vector $x :=(x_1, \ldots, x_n)$
and the second player has the strategy vector $y :=(y_1, \ldots, y_m)$.
The strategies $x,y$ usually represent probability measures over finite sets,
for instance, $x \in \Delta_n$, $y \in \Delta_m$.
(The notation $\Delta_n$ denotes the standard simplex in $\re^n$.)
A typical profit function of the first player is
\[
	f_1(x,y) \, := \, x^TA_1x + y^TA_2y + x^TBy,
\]
for matrices $A_1,A_2, B$.
For the zero sum game, the profit function
$f_2(x,y)$ of the second player is $-f_1(x,y)$.
Each player wants to maximize the profit,
for the given strategy of the other player.
A Nash equilibrium is a point $(x^*,y^*)$ such that the function $f_1(x,y^*)$ in $x$
achieves the maximum value at $x^*$, while the function $f_2(x^*,y)$ in $y$
achieves the maximum value at $y^*$. Since $f_1(x,y) + f_2(x,y) =0$,
the Nash equilibrium $(x^*,y^*)$ is a saddle point of the function $F := -f_1$
over feasible sets $X = \Dt_n$, $Y = \Dt_m$.

\item The image restoration \cite{HeYuan12} can be formulated
as a saddle point problem with the function
\[
 F(x,y) \, := \, x^TAy + \frac{1}{2}\|Bx-z\|^2
\]
and some feasible sets $X, Y$, for two given matrices $A,B$
and a given vector $z$. Here, the notation $\| \cdot \|$ denotes the Euclidean norm.
We refer to \cite{Anton11,Esser10,HeYuan12} for related work on this topic.

\item The saddle point problem plays an important role in
robust optimization \cite{bert10,hall03,Lasminmax11,zhu09}.
For instance, a statistical portfolio optimization problem is
\[
\min_{x\in X} \quad -\mu^Tx + x^TQx,
\]
where $Q$ is a covariance matrix,
$\mu$ is the estimation of some parameters, %$\nu$ is a constant,
and $X$ is the feasible set for the decision variable $x$.
In applications, there often exists a perturbation for $(\mu,Q)$.
Suppose the perturbation for $(\mu, Q)$ is $(\dt \mu, \dt Q)$.
There are two types of robust optimization problems
\[
\baray{ccl}
\min\limits_{x\in X} & \max\limits_{(\delta \mu,\delta Q)\in Y} &
-(\mu+\delta \mu)^Tx +  x^T(Q+\delta Q)x, \\
\max\limits_{(\delta \mu,\delta Q)\in Y} & \min\limits_{x\in X} &
-(\mu+\delta \mu)^Tx +  x^T(Q+\delta Q)x,
\earay
\]
where $Y$ is the feasible set for the perturbation $(\dt \mu, \dt Q)$.
People are interested in $x^*$ and $(\dt \mu^*, \dt Q^*)$
such that the above two robust optimization problems
are solved simultaneously by them.
In view of \reff{maxmin=minmax}, this is equivalent to solving
a saddle point problem.

\end{itemize}

For convex-concave type saddle point problems, most existing methods are based on
gradients, subgradients, variational inequalities,
or other related techniques. For these classical methods,
we refer to the work by Chen, Lan and Ouyang~\cite{CLOu14},
Cox, Juditsky and Nemirovski \cite{CJN17},
He and Yuan \cite{HeYuan12}, He and Monteiro \cite{HeMon15},
Korpelevich~\cite{Kor77}, Maistroskii~\cite{Mai77},
Monteiro and Svaiter \cite{MonSva11}, Nemirovski \cite{Nem04},
%Nemirovski and Judin \cite{NemJud},
Nedi\'{c} and Ozdaglar \cite{NedOzd}, and Zabotin~\cite{Zab88}.
For more general cases of non convex-concave type saddle point problems
(i.e., $F$ is not convex-concave, and/or one of the sets $X,Y$ is nonconvex),
the computational task for solving SPPPs is much harder.
A saddle point may, or may not, exist.
There is very little work for solving
non convex-concave saddle point problems \cite{DPGCGB,PDGB14}.
Obviously, SPPPs can be formulated as
a first order formula over the real field $\re$.
By the Tarski­-Seidenberg theorem \cite{BPR,Cavi12},
the SPPP is equivalent to a quantifier free formula.
Such quantifier free formula can be computed symbolically,
e.g., by cylindrical algebraic decompositions \cite{BPR,Cavi12,HED}.
Theoretically, the quantifier elimination (QE) method can solve SPPPs exactly,
but it typically has high computational complexity \cite{Brown07,Daven88,Regr92}.
Another straightforward approach for solving \reff{dfsad:F:XY}
is to compute all its critical points first and then select saddle points among them.
The complexity of computing critical points is given in \cite{Spa14}.
This approach typically has high computational cost, because the number of
critical points is dramatically high \cite{NR09}
and we need to check the global optimality relation in
\reff{dfsad:F:XY} for getting saddle points.
In the subsection~\ref{ssc:compa}, we will compare
the performance between these methods and the new one given in this paper.

The basic questions for saddle point problems are:
If a saddle point exists, how can we find it?
If it does not, how can we detect its nonexistence?
%%
%%These questions are mostly open,
%%to the best of the authors' knowledge.
%%
This paper discusses how to solve saddle point problems
that are given by polynomials and that are non convex-concave.
We give a numerical algorithm to solve SPPPs.
%when the polynomials are generic.

\subsection{Optimality conditions}
\label{ssc:opcd}

Throughout the paper, a property is said to hold {\it generically}
in a space if it is true everywhere except a subset of zero Lebesgue measure.
We refer to \cite{Har} for the notion of genericity in algebraic geometry.
Assume $X,Y$ are basic closed semialgebraic sets that are given as
\be \label{set:X}
X = \{ x \in \re^n \mid
g_i(x) = 0 \, (i \in \mc{E}^X_1), \,
g_i(x) \geq 0 \, (i \in \mc{E}^X_2) \},
\ee
\be \label{set:Y}
Y = \{ y \in \re^m \mid
h_j(y) = 0 \, (i \in \mc{E}^Y_1), \,
h_j(y) \geq 0 \, (i \in \mc{E}^Y_2) \}.
\ee
Here, each $g_i$ is a polynomial in $x := (x_1,\ldots,x_n)$
and each $h_j$ is a polynomial in $y := (y_1,\ldots, y_m)$.
The $\mc{E}^X_1, \mc{E}^X_2, \mc{E}^Y_1, \mc{E}^Y_2$
are disjoint labeling sets of finite cardinalities.
To distinguish equality and inequality constraints, denote the tuples
\be \label{gh:eq:in}
\boxed{
\baray{ll}
g_{eq} := (g_i)_{ i \in \mc{E}^X_1}, & h_{eq} := (h_j)_{ j \in \mc{E}^Y_1}, \\
g_{in} := (g_i)_{ i \in \mc{E}^X_2}, & h_{in} := (h_j)_{ j \in \mc{E}^Y_2}.
\earay
}
\ee
When $\mc{E}^X_1 = \emptyset$ (resp., $\mc{E}^X_2 = \emptyset$),
there is no equality (resp., inequality) constraint for $X$.
The same holds for $Y$. For convenience, denote the labeling sets
\[
\mc{E}^X:= \mc{E}^X_1 \cup  \mc{E}^X_2, \quad
\mc{E}^Y:= \mc{E}^Y_1 \cup  \mc{E}^Y_2.
\]
Suppose $(x^*, y^*)$ is a saddle point. Then, $x^*$ is a minimizer of
\be \label{minFx:y*}
\left\{ \baray{cl}
\min\limits_{x \in \re^n}  & F(x,y^*) \\
\mbox{subject to}  & g_i(x) = 0 \, (i \in \mc{E}^X_1), \\
 & g_i(x) \geq 0 \, (i \in \mc{E}^X_2),
\earay \right.
\ee
and $y^*$ is a maximizer of
\be \label{maxFy:x*}
\left\{ \baray{cl}
\max\limits_{y \in \re^m}  & F(x^*,y) \\
\mbox{subject to} & h_j(y) = 0 \, (j \in \mc{E}^Y_1), \\
 &  h_j(y) \geq 0 \, (j \in \mc{E}^Y_2).
\earay \right.
\ee
Under the linear independence constraint qualification (LICQ),
or other kinds of constraint qualifications (see \cite[\S3.3]{Brks}),
there exist Lagrange multipliers $\lmd_i, \mu_j$ such that
\be \label{opcd:F:x}
\nabla_x F(x^*, y^*) = \sum_{ i \in \mc{E}^X }  \lmd_i \nabla_x g_i(x^*) ,
\quad 0 \leq \lmd_i \perp g_i(x^*) \geq 0 \, (i \in \mc{E}^X_2),
\ee
\be \label{opcd:F:y}
\nabla_y F(x^*, y^*) = \sum_{j \in \mc{E}^Y } \mu_j  \nabla_y h_j(y^*),
\quad 0 \geq \mu_j \perp h_j(y^*) \geq  0 \, (j \in \mc{E}^Y_2).
\ee
In the above, $a\perp b$ means the product $a \cdot b =0$
and $\nabla_x F$ (resp., $\nabla_y F$) denotes
the gradient of $F(x,y)$ with respect to $x$ (resp., $y$).
When $g,h$ are nonsingular (see the below for the definition),
we can get explicit expressions for $\lmd_i, \mu_j$
in terms of $x^*,y^*$ (see \cite{LagExp}).
For convenience, write the labeling sets as
\[
\mc{E}^X =\{1,\ldots, \ell_1\}, \quad
\mc{E}^Y =\{1,\ldots, \ell_2\}.
\]
Then, the constraining polynomial tuples can be written as
\[
g \,  =  \,  (g_1, \ldots, g_{\ell_1}), \quad
h  \, =  \, (h_1, \ldots, h_{\ell_2}).
\]
The Lagrange multipliers can be written as vectors
\[
\lmd  \, = \,  (\lmd_1, \ldots, \lmd_{\ell_1}), \quad
\mu  \, =  \,  (\mu_1, \ldots, \mu_{\ell_2}).
\]
Denote the matrices
\be \label{G(x)}
G(x) \, := \,  \bbm
\nabla_x g_1(x) & \nabla_x g_2(x) &  \cdots &  \nabla_x g_{\ell_1}(x) \\
g_1(x) & 0  & \cdots & 0 \\
0  & g_2(x)  & \cdots & 0 \\
\vdots & \vdots & \ddots & \vdots \\
0  &  0  & \cdots & g_{\ell_1}(x)
\ebm,
\ee
\be \label{H(y)}
H(y) \, := \,  \bbm
\nabla_y h_1(y) & \nabla_y h_2(y) &  \cdots &  \nabla_y h_{\ell_2}(y) \\
h_1(y) & 0  & \cdots & 0 \\
0  & h_2(y)  & \cdots & 0 \\
\vdots & \vdots & \ddots & \vdots \\
0  &  0  & \cdots & h_{\ell_2}(y)
\ebm.
\ee
The tuple $g$ is said to be {\it nonsingular} if
$\rank \, G(x) = \ell_1$ for all $x \in \cpx^n$.
Similarly, $h$ is nonsingular if
$\rank \, H(y) = \ell_2$ for all $y \in \cpx^m$.
Note that if $g$ is nonsingular,
then LICQ must hold at $x^*$. Similarly, the LICQ holds at $y^*$
if $h$ is nonsingular.
When $g,h$ have generic coefficients (i.e., $g,h$ are generic),
the tuples $g,h$ are nonsingular.
The nonsingularity is a property that holds generically.
We refer to the work \cite{LagExp} for more details.

\subsection{Contributions}

This paper discusses how to solve saddle point problems of polynomials.
We assume that the sets $X, Y$ are given as in \reff{set:X}-\reff{set:Y}
and the defining polynomial tuples $g,h$
are nonsingular, i.e., the matrices
$G(x), H(y)$ have full column rank everywhere.
Then, as shown in \cite{LagExp},
there exist matrix polynomials $G_1(x), H_1(y)$ such that
($I_\ell$ denotes the $\ell \times \ell$ identity matrix)
\be \label{G1G=H1H=I}
G_1(x)G(x) \,=\, I_{\ell_1}, \quad
H_1(y)H(y) \,=\, I_{\ell_2}.
\ee
When $g,h$ have generic coefficients, they are nonsingular.
Clearly, the above and \reff{opcd:F:x}-\reff{opcd:F:y} imply that
\[
\lmd_i = G_1(x^*)_{i,1:n} \nabla_x F(x^*,y^*), \quad
\mu_j = H_1(y^*)_{j,1:m} \nabla_y F(x^*,y^*).
\]
(For a matrix $A$, the notation $A_{i,1:n}$ denotes its $i$th row
with column indices from $1$ to $n$.)
Denote the Lagrange polynomial tuples
\be \label{lmd(x,y)}
 \lmd(x,y):= G_1(x)_{:,1:n} \nabla_x F(x,y),
\ee
\be \label{mu(x,y)}
 \mu(x,y):= H_1(y)_{:,1:m} \nabla_y F(x,y).
\ee
(The notation $A_{:,1:n}$ denotes the submatrix of $A$
consisting of its first $n$ columns.)
At each saddle point $(x^*, y^*)$, the Lagrange multiplier vectors $\lmd, \mu$
in \reff{opcd:F:x}-\reff{opcd:F:y} can be expressed as
\[
\lmd = \lmd(x^*,y^*), \quad
\mu = \mu(x^*,y^*).
\]
Therefore, $(x^*,y^*)$ is a solution to the polynomial system
\be \label{SadKKT:XY}
\left\{ \baray{l}
g_i(x) = 0 \, (i \in \mc{E}^X_1), \,  h_j(y) =  0 \, (j \in \mc{E}^Y_1), \\
\nabla_xF(x, y) = \sum\limits_{ i \in \mc{E}^X }  \lmd_i(x,y) \nabla_x g_i(x) , \\
\nabla_yF(x, y) = \sum\limits_{j \in \mc{E}^Y } \mu_j(x,y)  \nabla_y h_j(y), \\
0 \leq \lmd_i(x,y) \perp g_i(x) \geq 0 \, (i \in \mc{E}^X_2), \\
0 \geq \mu_j(x,y) \perp h_j(y) \geq  0 \, (j \in \mc{E}^Y_2).
\earay\right.
\ee
However, not every solution $(x^*, y^*)$ to \reff{SadKKT:XY}
is a saddle point. This is because $x^*$
might not be a minimizer of \reff{minFx:y*}, and/or
$y^*$ might not be a maximizer of \reff{maxFy:x*}.
How can we use \reff{SadKKT:XY} to get a saddle point?
What further conditions do saddle points satisfy?
When a saddle point does not exist,
what is an appropriate certificate for the nonexistence?
This paper addresses these questions.
We give an algorithm for computing saddle points.
First, we compute a candidate saddle point $(x^*, y^*)$.
If it is verified to be a saddle point, then we are done.
If it is not, then either $x^*$ is not a minimizer of \reff{minFx:y*} or
$y^*$ is not a maximizer of \reff{maxFy:x*}.
For either case, we add a new valid constraint to
exclude such $(x^*, y^*)$, while all true saddle points
are not excluded. Then, we solve a new optimization problem,
together with the newly added constraints.
Repeating this process, we get an algorithm
(i.e., Algorithm~\ref{alg:pop:kt}) for solving SPPPs.
When the SPPP is given by generic polynomials,
we prove that Algorithm~\ref{alg:pop:kt} is able to
compute a saddle point if it exists,
and it can detect nonexistence if there does not exist one.
The candidate saddle points are optimizers of certain polynomial optimization problems.
We also show that these polynomial optimization problems can be solved exactly by
Lasserre's hierarchy of semidefinite relaxations,
under some genericity conditions on defining polynomials.
Since semidefinite programs are usually solved numerically
(e.g., by {\tt SeDuMi}) in practice,
the computed solutions are correct up to numerical errors.

The paper is organized as follows.
Section~\ref{sc:pre} reviews some basics for polynomial optimization.
Section~\ref{sc:algSPPP} gives an algorithm for solving SPPPs.
We prove its finite convergence when the polynomials are generic.
Section~\ref{sc:slvopt} discusses how to solve
the optimization problems that arise in Section~\ref{sc:algSPPP}.
Under some genericity conditions, we prove that
Lasserre type semidefinite relaxations
can solve those optimization problems exactly.
Proofs of some core theorems are given in Section~\ref{sc:proof}.
Numerical examples are given in Section~\ref{sc:num}.
Conclusions and some discussions are given in Section~\ref{sc:con}.

\section{Preliminaries}
\label{sc:pre}

This section reviews some basics in polynomial optimization.
We refer to \cite{BPT13,LasBk15,LasICM,LauSv,LauICM,Sch09}
for the books and surveys in this field.

\subsection{Notation}

The symbol $\N$ (resp., $\re$, $\cpx$) denotes the set of
nonnegative integral (resp., real, complex) numbers.
Denote by $\re[x] := \re[x_1,\ldots,x_n]$
the ring of polynomials in $x:=(x_1,\ldots,x_n)$
with real coefficients. The notation $\re[x]_d$
stands for the set of polynomials in $\re[x]$
with degrees $\leq d$. Sometimes, we need to work with polynomials
in $y:=(y_1,\ldots,y_m)$ or
$(x,y):=(x_1,\ldots,x_n,y_1,\ldots,y_m)$. The notation
$\re[y],\re[y]_d$, $\re[x,y],\re[x,y]_d$
are similarly defined.
For a polynomial $p$, $\deg(p)$ denotes its total degree.
For $t \in \re$, $\lceil t \rceil$ denotes
the smallest integer $\geq t$. For an integer $k>0$, denote
$[k] \, := \, \{1,2,\ldots, k\}.$
For $\af:= (\af_1, \ldots, \af_l) \in \N^l$ with an integer $l >0$,
denote $|\af|:=\af_1+\cdots+\af_l$.
For an integer $d > 0$, denote
\[
\N^l_d :=\{ \af  \in \N^l \mid |\af|  \leq d \}.
\]
For $z =(z_1,\ldots, z_l)$ and $\af = (\af_1, \ldots, \af_l)$, denote
\[
z^\af \, := \, z_1^{\af_1} \cdots z_l^{\af_l}, \quad
[z]_{d} := \bbm 1 & z_1 &\cdots & z_l & z_1^2 & z_1z_2 & \cdots & z_l^{d}\ebm^T.
\]
In particular, we often use the notation $[x]_d, [y]_d$ or $[(x,y)]_d$.
The superscript $^T$ denotes the transpose of a matrix/vector.
The notation $e_i$ denotes the $i$th standard unit vector,
while $e$ denotes the vector of all ones.
The notation $I_k$ denotes the $k$-by-$k$ identity matrix.
By writing $X\succeq 0$ (resp., $X\succ 0$), we mean that
$X$ is a symmetric positive semidefinite
(resp., positive definite) matrix.
For matrices $X_1,\ldots, X_r$, $\diag(X_1, \ldots, X_r)$
denotes the block diagonal matrix whose diagonal blocks
are $X_1,\ldots, X_r$. For a vector $z$,
$\|z\|$ denotes its standard Euclidean norm.
For a function $f$ in $x$, in $y$, or in $(x,y)$,
$\nabla_x f$ (resp., $\nabla_y f$) denotes
its gradient vector in $x$ (resp., in $y$).
In particular, $F_{x_i}$ denotes the partial derivative of
$F(x,y)$ with respect to $x_i$.

\subsection{Positive polynomials}
\label{ssc:posp}

In this subsection, we review some basic results
about positive polynomials in the ring $\re[x,y]$.
The same kind of results hold for
positive polynomials in $\re[x]$ or $\re[y]$.
An ideal $I$ of $\re[x,y]$ is a subset
such that $ I \cdot \re[x,y] \subseteq I$
and $I+I \subseteq I$. For a tuple
$p=(p_1,\ldots,p_k)$ of polynomials in $\re[x,y]$,
$\mbox{Ideal}(p)$ denotes the smallest ideal containing all $p_i$,
which is the set
\[
p_1 \cdot \re[x,y] + \cdots + p_k  \cdot \re[x,y].
\]
In computation, we often need to work with the {\it truncation}:
\[
\mbox{Ideal}(p)_{2k}  \, := \,
p_1 \cdot \re[x,y]_{2k-\deg(p_1)} + \cdots + p_k  \cdot \re[x,y]_{2k-\deg(p_k)}.
\]
For an ideal $I \subseteq \re[x,y]$,
its complex and real varieties are defined respectively as
\begin{align*}
\mc{V}_{\cpx}(I) := \{(u,v)\in \cpx^n \times \cpx^m \mid \,
f(u,v) = 0 \, \forall \, f \in I \}, \\
\mc{V}_{\re}(I)  := \{(u,v)\in \re^n \times \re^m \mid \,
f(u,v) = 0 \, \forall \, f \in I \}.
\end{align*}

A polynomial $\sig$ is said to be a sum of squares (SOS)
if $\sig = s_1^2+\cdots+ s_k^2$ for some real polynomials $s_1,\ldots, s_k$.
Whether or not a polynomial is SOS can be checked
by solving a semidefinite program (SDP) \cite{Las01,ParMP}.
Clearly, if a polynomial is SOS, then it is nonnegative everywhere.
However, the reverse may not be true.
Indeed, there are significantly more nonnegative polynomials than
SOS ones \cite{Blek06,BPT13}.
The set of all SOS polynomials in $(x,y)$ is denoted as $\Sig[x,y]$,
and its $d$th truncation is
\[
\Sig[x,y]_d := \Sig[x,y] \cap \re[x,y]_d.
\]
For a tuple $q=(q_1,\ldots,q_t)$ of polynomials in $(x,y)$,
its {\it quadratic module} is
\[
\mbox{Qmod}(q):=  \Sig[x,y] + q_1 \cdot \Sig[x,y] + \cdots + q_t \cdot \Sig[x,y].
\]
We often need to work with the truncation
\[
\mbox{Qmod}(q)_{2k} \, := \,
\Sig[x,y]_{2k} + q_1 \cdot \Sig[x,y]_{2k - \deg(g_1)}
+ \cdots + q_t \cdot \Sig[x,y]_{2k - \deg(q_t)}.
\]
For two tuples $p=(p_1,\ldots,p_k)$ and $q=(q_1,\ldots,q_t)$
of polynomials in $(x,y)$, for convenience, we denote
\be \label{df:IQ(h,g)}
\left\{ \baray{lcl}
\mbox{IQ}(p,q) &:=& \mbox{Ideal}(p) + \mbox{Qmod}(q), \\
\mbox{IQ}(p,q)_{2k} &:=& \mbox{Ideal}(p)_{2k} + \mbox{Qmod}(q)_{2k}.
\earay \right.
\ee
The set $\mbox{IQ}(p,q)$ (resp., $\mbox{IQ}(p,q)_{2k}$)
is a convex cone that is contained in $\re[x,y]$ (resp., $\re[x,y]_{2k}$).

The set $\mbox{IQ}(p,q)$ is said to be {\it archimedean}
if there exists $\sig \in \mbox{IQ}(p,q)$ such that
$\sig(x,y) \geq 0$ defines a compact set in $\re^n \times \re^m$.
If $\mbox{IQ}(p,q)$ is archimedean, then the set
$K: = \{p(x,y)=0, \, q(x,y)\geq 0 \}$ must be compact.
The reverse is not always true. However, if $K$ is compact, say,
$K \subseteq B(0,R)$
(the ball centered at $0$ with radius $R$), then
$\mbox{IQ}(p, \tilde{q})$ is always archimedean,
with $\tilde{q} = (q, R-\|x\|^2 - \|y\|^2)$,
while $\{p(x,y)=0, \, \tilde{q}(x,y) \geq 0 \}$
defines the same set $K$. Under the assumption that
$\mbox{IQ}(p,q)$ is archimedean,
every polynomial in $(x,y)$,
which is strictly positive on $K$,
must belong to $\mbox{IQ}(p,q)$.
This is the so-called Putinar's Positivstellensatz~\cite{Put}.
Interestingly, under some optimality conditions,
if a polynomial is nonnegative (but not strictly positive) over $K$,
then it belongs to $\mbox{IQ}(p,q)$.
This is shown in \cite{Nie-opcd}.

The above is for polynomials in $(x,y)$.
For polynomials in only $x$ or $y$,
the ideals, sum-of-squares, quadratic modules,
and their truncations are defined in the same way.
The notation $\Sig[x],\Sig[x]_d, \Sig[y],\Sig[y]_d$ are similarly defined.

\subsection{Localizing and moment matrices}

Let $\xi :=(\xi_1, \ldots, \xi_l)$ be a subvector of
$(x,y):=(x_1,\ldots,x_n, y_1, \ldots, y_m)$.
Throughout the paper, the vector $\xi$
is either $x$, or $y$, or $(x,y)$.
Denote by $\re^{\N_d^l}$ the space of real sequences
indexed by $\af \in \N_d^l$.
A vector in $w :=(w_\af)_{ \af \in \N_d^l} \in \re^{\N_d^l}$ is called a
{\it truncated multi-sequence} (tms) of degree $d$.
It gives the {\it Riesz functional}
$\mathscr{R}_w$ acting on $\re[\xi]_d$ as (each $f_\af \in \re$)
\be
\mathscr{R}_w \Big(\sum_{\af \in \N_d^l}
f_\af \xi^\af  \Big)\, := \, \sum_{\af \in \N_d^l}  f_\af w_\af.
\ee
For $f \in \re[\xi]_d$ and $w \in \re^{\N_d^l}$, we denote
\be \label{df:<p,y>}
\langle f, w \rangle \,:=\, \mathscr{R}_w(f).
\ee
Consider a polynomial $q \in \re[\xi]_{2k}$
with $\deg(q) \leq 2k$.
The $k$th {\it localizing matrix} of $q$,
generated by a tms $w \in \re^{\N^l_{2k}}$,
is the symmetric matrix $L_q^{(k)}(w)$ such that
\be  \label{LocMat}
vec(a_1)^T \Big( L_q^{(k)}(w) \Big) vec(a_2)  = \mathscr{R}_w(q a_1 a_2)
\ee
for all $a_1,a_2 \in \re[\xi]_{k - \lceil \deg(q)/2 \rceil}$.
(The $vec(a_i)$ denotes the coefficient vector of $a_i$.)
For instance, when $n=2$ and $k=2$ and $q = 1 - x_1^2-x_2^2$, we have
\[
L_q^{(2)}[w]=\left [
\begin{matrix}
w_{00}-w_{20}-w_{02} &  w_{10}-w_{30}-w_{12} &  w_{01}-w_{21}-w_{03} \\
w_{10}-w_{30}-w_{12} &  w_{20}-w_{40}-w_{22} &  w_{11}-w_{31}-w_{13} \\
w_{01}-w_{21}-w_{03} &  w_{11}-w_{31}-w_{13} &  w_{02}-w_{22}-w_{04} \\
\end{matrix}\right ].
\]
When $q = 1$ (the constant one polynomial),
$L_q^{(k)}(w)$ is called the {\it moment matrix} and we denote
\be \label{MomMat}
M_k(w):= L_{1}^{(k)}(w).
\ee
The columns and rows of $L_q^{(k)}(w)$, as well as $M_k(w)$,
are labeled by $\af \in \N^l$ with $2|\af|  \leq 2k - \deg(q) $.
When $q = (q_1, \ldots, q_t)$ is a tuple of polynomials,
then we define
\be  \label{block:LM}
L_q^{(k)}(w) \, := \, \mbox{diag}\Big(
L_{q_1}^{(k)}(w), \ldots,  L_{q_t}^{(k)}(w)
\Big),
\ee
which is a block diagonal matrix.
Moment and localizing matrices are important tools for constructing
semidefinite programming relaxations for
solving moment and polynomial optimization problems
\cite{FiaNie12,HelNie12,Las01,linopt15}.
Moreover, moment matrices are also useful
for computing tensor decompositions \cite{NieGP17}.
We refer to \cite{Todd}
for a survey on semidefinite programming and applications.

\section{An algorithm for solving SPPPs}
\label{sc:algSPPP}

Let $F,g,h$ be the polynomial tuples for the saddle point problem
\reff{dfsad:F:XY}. Assume $g,h$ are nonsingular.
So the Lagrange multiplier vectors $\lmd(x,y),\mu(x,y)$ can be expressed as in
\reff{lmd(x,y)}-\reff{mu(x,y)}. We have seen that
each saddle point $(x^*,y^*)$ must satisfy \reff{SadKKT:XY}.
This leads us to consider the optimization problem
\be  \label{min:F(xy):kt}
\left\{ \baray{cl}
\min\limits_{x\in X, y\in Y}  &  F(x,y) \\
\mbox{subject to} & \nabla_x F(x,y) - \sum_{i \in \mc{E}^X} \lmd_i(x,y) \nabla_x g_i(x) = 0, \\
     &  \nabla_y F(x,y) - \sum_{j \in \mc{E}^Y} \mu_j(x,y) \nabla_y h_j(y) = 0, \\
     &  0 \leq \lmd_i(x,y) \perp  g_i(x)  \geq 0 \, (i \in \mc{E}_2^X ), \\
     &  0 \geq \mu_j(x,y) \perp  h_j(y)  \geq 0 ( j \in \mc{E}_2^Y),  \\
\earay \right.
\ee
where $\lmd_i(x,y)$ and $\mu_j(x,y)$ are Lagrange polynomials
given as in \reff{lmd(x,y)}-\reff{mu(x,y)}.
The saddle point problem \reff{dfsad:F:XY}
is not equivalent to \reff{min:F(xy):kt}.
However, the optimization problem~\reff{min:F(xy):kt}
can be used to get a candidate saddle point.
Suppose $(x^*,y^*)$ is a minimizer of \reff{min:F(xy):kt}.
If $x^*$ is a minimizer of $F(x,y^*)$ over $X$
and $y^*$ is a maximizer of $F(x^*,y)$ over $Y$,
then $(x^*,y^*)$ is a saddle point;
otherwise, such $(x^*,y^*)$ is not a saddle point,
i.e., there exists $u \in X$ and/or there exists $v \in Y$ such that
\[
F(u,y^*) - F(x^*, y^*) < 0 \quad \mbox{and/or} \quad
F(x^*,v) - F(x^*, y^*) > 0.
\]
The points $u,v$ can be used to give new constraints
\be \label{new:valid:ineq}
F(u,y) - F(x, y) \ge 0 \quad \mbox{and/or} \quad
F(x,y) - F(x, v) \ge 0.
\ee
Every saddle point $(x,y)$ must satisfy \reff{new:valid:ineq},
so \reff{new:valid:ineq} can be added to the optimization problem~\reff{min:F(xy):kt}
without excluding any true saddle points.
For generic polynomials $F,g,h$, the problem \reff{min:F(xy):kt}
has only finitely many feasible points (see Theorem~\ref{thm:Cktpt:fnt}).
Therefore, by repeatedly adding new inequalities like \reff{new:valid:ineq},
we can eventually get a saddle point or detect nonexistence of saddle points.
This results in the following algorithm.

\begin{alg}  \label{alg:pop:kt}
(An algorithm for solving saddle point problems.)

\bit

\item[\textbf{Input:}] The polynomials $F, g, h$
as in \reff{dfsad:F:XY}, \reff{set:X}, \reff{set:Y}
and Lagrange multiplier expressions as in
\reff{lmd(x,y)}-\reff{mu(x,y)}.

\item[Step~0:] Let $K_1=K_2 = \mc{S}_a := \emptyset$ be empty sets.

\item [Step~1:] If the problem~\reff{min:F(xy):kt} is infeasible,
then \reff{dfsad:F:XY} does not have a saddle point and stop;
otherwise, solve \reff{min:F(xy):kt}
for a set $K^0$ of minimizers. Let $k:=0$.

\item [Step~2:] For each $(x^*, y^*) \in K^k$, do the following:
\bit
\item [(a):] (Lower level minimization)
Solve the problem
\be \label{min:F(x,y*)}
\left\{ \baray{rl}
\vartheta_1(y^*):= \min\limits_{x \in X}   &  F(x,y^*) \\
\mbox{subject to} & \nabla_x F(x,y^*) -
\sum_{i \in \mc{E}^X} \lmd_i(x,y^*) \nabla_x g_i(x) = 0, \\
     &  0 \leq \lmd_i(x,y^*) \perp  g_i(x)  \geq 0 \, (i \in \mc{E}_2^X ),\\
\earay \right.
\ee
and get a set of minimizers $S_1(y^*)$.
If $F(x^*,y^*) > \vartheta_1(y^*)$, update
\[
K_1  \, := \, K_1 \cup S_1(y^*).
\]

\item [(b):] (Lower level maximization)
Solve the problem
\be \label{max:F(x*,y)}
\left\{ \baray{rl}
\vartheta_2(x^*):= \max\limits_{y \in Y}  &  F(x^*,y) \\
\mbox{subject to} & \nabla_y F(x^*,y) - \sum_{j \in \mc{E}^Y}
\mu_j(x^*,y) \nabla_y h_j(y) = 0, \\
&  0 \geq \mu_j(x^*,y) \perp  h_j(y)  \geq 0 ( j \in \mc{E}_2^Y) \\
\earay \right.
\ee
and get a set of maximizers $S_2(x^*)$.
If $F(x^*,y^*) < \vartheta_2(x^*)$, update
\[
K_2  \, := \, K_2 \cup S_2(x^*).
\]

\item [(c):]
If $\vartheta_1(y^*) = F(x^*,y^*) = \vartheta_2(x^*)$, update:
\[
\mc{S}_a := \mc{S}_a \cup \{ (x^*, y^*) \}.
\]

\eit

\item [Step~3:] If $\mc{S}_a \ne \emptyset$,
then each point in $\mc{S}_a$ is a saddle point and stop;
otherwise go to Step~4.

\item [Step~4:] (Upper level minimization)
Solve the optimization problem
\be  \label{minF:KT:uv}
\left\{ \baray{cl}
\min\limits_{x\in X, y\in Y} &  F(x,y) \\
\mbox{subject to} & \nabla_x F(x,y) - \sum_{i \in \mc{E}^X} \lmd_i(x,y) \nabla_x g_i(x) = 0, \\
     &  \nabla_y F(x,y) - \sum_{j \in \mc{E}^Y} \mu_j(x,y) \nabla_y h_j(y) = 0, \\
     &  0 \leq \lmd_i(x,y) \perp  g_i(x)  \geq 0 \, (i \in \mc{E}_2^X ), \\
     &  0 \geq \mu_j(x,y) \perp  h_j(y)  \geq 0 ( j \in \mc{E}_2^Y),   \\
     &  F(u,y) - F(x,y) \geq 0 \, ( u \in K_1),  \\
     &  F(x,v) - F(x,y) \leq 0 \, ( v \in K_2).  \\
\earay \right.
\ee
If \reff{minF:KT:uv} is infeasible,
then \reff{dfsad:F:XY} has no saddle points and stop;
otherwise, compute a set $K^{k+1}$
of optimizers for \reff{minF:KT:uv}.
Let $k:=k+1$ and go to Step~2.

\item[\textbf{Output:}]
If $\mc{S}_a$ is nonempty,
every point in $\mc{S}_a$ is a saddle point;
otherwise, output that there is no saddle point.

\eit
\end{alg}

For generic polynomials, the feasible set $\mc{K}_0$
of \reff{min:F(xy):kt}, as well as
each $K^k$ in Algorithm~\ref{alg:pop:kt}, is finite.
The convergence of Algorithm~\ref{alg:pop:kt} is shown as follows.

\begin{theorem} \label{thm:fnt:iter}
Let $\mc{K}_0$ be the feasible set of \reff{min:F(xy):kt}
and let $\mc{S}_a$ be the set of saddle points for \reff{dfsad:F:XY}.
If the complement set of $\mc{S}_a$ in $\mc{K}_0$
(i.e., the set $\mc{K}_0 \setminus \mc{S}_a$) is finite,
then Algorithm~\ref{alg:pop:kt} must terminate
after finitely many iterations.
Moreover, if $\mc{S}_a \ne \emptyset$,
then each $(x^*, y^*) \in \mc{S}_a$ is a saddle point;
if $\mc{S}_a = \emptyset$, then there is no saddle point.
\end{theorem}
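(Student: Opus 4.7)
The plan is to analyze the sequence of feasible sets $\mc{F}_k$ from which $K^k$ is drawn: set $\mc{F}_0 := \mc{K}_0$, and for $k\ge 1$ let $\mc{F}_k$ be the feasible set of \reff{minF:KT:uv} with the cuts accumulated through iteration $k-1$. I will establish two invariants: (i) every true saddle point stays in $\mc{F}_k$ for every $k$, and (ii) every candidate $(x^*,y^*)\in K^k$ that fails Step~2 is strictly excluded from $\mc{F}_{k+1}$ by a newly added cut. Combined with the finiteness of $\mc{K}_0\setminus\mc{S}_a$, these invariants force finite termination; moreover, since true saddle points are never removed, any point certified in Step~2(c) must be a saddle point of minimum $F$-value.

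For invariant~(i), any saddle point $(\bar x,\bar y)$ satisfies the KKT conditions \reff{opcd:F:x}--\reff{opcd:F:y} and hence the polynomial system \reff{SadKKT:XY} via the Lagrange expressions \reff{lmd(x,y)}--\reff{mu(x,y)}, so $(\bar x,\bar y)\in\mc{K}_0=\mc{F}_0$. The extra cuts used in \reff{minF:KT:uv} take the form $F(u,y)\ge F(x,y)$ for $u\in K_1\subseteq X$ and $F(x,v)\le F(x,y)$ for $v\in K_2\subseteq Y$; both follow directly from the saddle point inequality \reff{dfsad:F:XY}, so $(\bar x,\bar y)\in\mc{F}_k$ for every $k$.

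For invariant~(ii), suppose $(x^*,y^*)\in K^k$ is not certified in Step~2(c). Then either $F(x^*,y^*)>\vartheta_1(y^*)$, in which case Step~2(a) appends some $u^*\in S_1(y^*)$ to $K_1$ and the cut $F(u^*,y)-F(x,y)\ge 0$ is strictly violated at $(x^*,y^*)$ since $F(u^*,y^*)=\vartheta_1(y^*)<F(x^*,y^*)$; or $F(x^*,y^*)<\vartheta_2(x^*)$, and the symmetric cut involving any $v^*\in S_2(x^*)$ is strictly violated. Either way, $(x^*,y^*)\notin\mc{F}_{k+1}$.

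Finite termination then follows at once. At every iteration $k$ that does not stop, every element of $K^k$ fails Step~2, so $K^k\subseteq\mc{F}_k\setminus\mc{F}_{k+1}$ and hence $\mc{F}_{k+1}\setminus\mc{S}_a\subsetneq\mc{F}_k\setminus\mc{S}_a$; since these sets lie in the finite set $\mc{K}_0\setminus\mc{S}_a$, the strict descent must stop, at which point Step~3 returns a nonempty output or Step~4 detects infeasibility. For the minimum-value assertion, invariant~(i) gives $\min_{(x,y)\in\mc{F}_k}F(x,y)\le\min_{(\bar x,\bar y)\in\mc{S}_a}F(\bar x,\bar y)$ throughout; any point $(x^*,y^*)$ returned by Step~2(c) is both a saddle point and a minimizer of \reff{minF:KT:uv} on $\mc{F}_k$, so the two bounds match and $F(x^*,y^*)$ equals $\min_{\mc{S}_a}F$. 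The only genuinely delicate point is to confirm that the same inequality is simultaneously valid at every true saddle point and strictly separating for a failed candidate; once this is in hand, the rest is a straightforward finite-descent argument.
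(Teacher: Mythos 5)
Your proof is correct and follows essentially the same approach as the paper: the key observations are that the new cuts exclude failed candidates (so the sets $K^0, K^1, \ldots$ are disjoint) while never excluding true saddle points, which together with finiteness of $\mc{K}_0 \setminus \mc{S}_a$ forces termination, and the minimum-value claim follows since every true saddle point remains feasible for \reff{minF:KT:uv}. The framing as monotone invariants on the feasible sets $\mc{F}_k$ is a slightly more explicit bookkeeping device than the paper's disjointness argument, but the underlying reasoning is identical.
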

\begin{proof}
At an iteration, if $\mc{S}_a \ne \emptyset$,
then Algorithm~\ref{alg:pop:kt} terminates.
For each iteration with $\mc{S}_a = \emptyset$,
each point $(x^*, y^*) \in K^k$ is not feasible for \reff{minF:KT:uv}.
When the $k$th iteration goes to the $(k+1)$th one, the nonempty sets
\[
K^0, \quad K^1, \quad K^2, \quad K^3, \ldots, \quad K^k
\]
are disjoint from each other. All the points in $K^i$
are not saddle points, so
\[
\bigcup_{i=0}^k K^i \subseteq \mc{K}_0 \setminus \mc{S}_a.
\]
Therefore, when the set $\mc{K}_0 \setminus \mc{S}_a$ is finite,
Algorithm~\ref{alg:pop:kt} must terminate after finitely many iterations.

When $\mc{S}_a \ne \emptyset$, each point $(x^*, y^*) \in \mc{S}_a$
is verified as a saddle point in Step~2.
When $\mc{S}_a = \emptyset$, Algorithm~\ref{alg:pop:kt}
stops in Step~4 at some iteration,
with the case that \reff{minF:KT:uv} is infeasible.
Since every saddle point is feasible for both
\reff{min:F(xy):kt} and \reff{minF:KT:uv},
there does not exist a saddle point if $\mc{S}_a = \emptyset$.
\end{proof}

The number of iterations required by Algorithm~\ref{alg:pop:kt}
to terminate is bounded above by the cardinality of the complement set
$\mc{K}_0 \setminus \mc{S}_a$, which is always less than or equal to
the cardinality $|\mc{K}_0|$ of the feasible set of \reff{min:F(xy):kt}.
Generally, it is hard to count
$|\mc{K}_0 \setminus \mc{S}_a|$ or $|\mc{K}_0|$ accurately.
When the polynomials $F,g,h$ are generic,
we can prove that the number of solutions
for equality constraints in \reff{min:F(xy):kt} is finite.
For degrees $a_0, b_0 >0$, denote the set product
$\cpx[x,y]_{a_0, b_0} := \cpx[x]_{a_0} \cdot \cpx[y]_{b_0}$.

\begin{theorem} \label{thm:Cktpt:fnt}
Let $a_i, b_j >0$ be positive degrees, for
$i \in \mc{E}^X$ and $j \in \mc{E}^Y$.
If $F(x,y) \in \cpx[x,y]_{a_0,b_0}$,
$g_i \in \cpx[x]_{a_i}$, $h_j\in \cpx[y]_{b_j}$
are generic polynomials, then the polynomial system
\be \label{cpxkkt:Fxy}
\left\{ \baray{l}
\nabla_xF(x, y) = \sum_{ i \in \mc{E}^X }  \lmd_i(x,y) \nabla_x g_i(x) , \\
 g_i(x)  = 0 \, (i \in \mc{E}^X_1), \,
 \lmd_i(x,y)   g_i(x)  = 0 \, (i \in \mc{E}^X_2), \\
\nabla_yF(x, y) = \sum_{j \in \mc{E}^Y } \mu_j(x,y)  \nabla_y h_j(y), \\
 h_j(y)  = 0 \, (j \in \mc{E}^Y_1), \,
 \mu_j(x,y) h_j(y)  = 0 \, (j \in \mc{E}^Y_2)
\earay\right.
\ee
has only finitely many complex solutions in $\cpx^n \times \cpx^m$.
\end{theorem}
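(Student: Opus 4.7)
The plan is to reduce the finiteness claim to a single-branch argument and then handle each branch via a parameter-space semicontinuity argument with an explicit decoupled witness. For every pair $(I_1,I_2)$ with $I_1\subseteq\mc{E}^X_2$ and $I_2\subseteq\mc{E}^Y_2$, let $V_{I_1,I_2}$ denote the subvariety of $\cpx^n\times\cpx^m$ cut out by the two gradient identities of \reff{cpxkkt:Fxy} together with
\[
g_i(x)=0\ (i\in\mc{E}^X_1\cup I_1),\quad \lmd_i(x,y)=0\ (i\in\mc{E}^X_2\setminus I_1),
\]
and the symmetric conditions on $h$ and $\mu$. The solution set of \reff{cpxkkt:Fxy} equals the finite union $\bigcup_{(I_1,I_2)}V_{I_1,I_2}$, so it suffices to show each $V_{I_1,I_2}$ is zero-dimensional for generic coefficients.

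Fix $(I_1,I_2)$ and let $\mathscr{P}:=\cpx[x,y]_{a_0,b_0}\times\prod_{i\in\mc{E}^X}\cpx[x]_{a_i}\times\prod_{j\in\mc{E}^Y}\cpx[y]_{b_j}$ be the affine parameter space of coefficient tuples. Upper-semicontinuity of fiber dimension applied to the projection $\{(x,y,F,g,h):(x,y)\in V_{I_1,I_2}(F,g,h)\}\to\mathscr{P}$ shows that the bad locus $B_{I_1,I_2}:=\{(F,g,h):\dim V_{I_1,I_2}(F,g,h)\geq 1\}$ is Zariski-closed in $\mathscr{P}$. Thus it is enough to exhibit one coefficient tuple $(F_0,g_0,h_0)\in\mathscr{P}$ for which $V_{I_1,I_2}$ is finite; taking the union of the finitely many $B_{I_1,I_2}$ will then produce a proper Zariski-closed bad locus whose complement gives the desired generic set.

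For the witness I would take $F_0(x,y)=f_1(x)+f_2(y)$ with $f_1\in\cpx[x]_{a_0}$, $f_2\in\cpx[y]_{b_0}$ and $g_0,h_0$ all chosen generically of the prescribed degrees. Because $\nabla_xF_0$ depends only on $x$ and $\nabla_yF_0$ only on $y$, the Lagrange polynomials of \reff{lmd(x,y)}-\reff{mu(x,y)} become $\lmd(x)=G_1(x)_{:,1:n}\nabla f_1(x)$ and $\mu(y)=H_1(y)_{:,1:m}\nabla f_2(y)$, each depending on a single block of variables. Consequently $V_{I_1,I_2}$ splits as a Cartesian product $V^X_{I_1}\times V^Y_{I_2}\subseteq\cpx^n\times\cpx^m$, where $V^X_{I_1}$ is defined by $n+\ell_1$ polynomial equations in $n$ variables built from $(f_1,g_0)$ alone, and $V^Y_{I_2}$ symmetrically from $(f_2,h_0)$. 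For generic $(f_1,g_0)$ the set $V^X_{I_1}$ is the complex KKT set for the polynomial program $\min f_1(x)$ subject to the active equality system of $X$ selected by $I_1$, and the generic finiteness of such KKT sets under nonsingularity of the constraint tuple is precisely what the Lagrange-multiplier-expression machinery of \cite{LagExp} delivers. The same argument applies to $V^Y_{I_2}$, so $V_{I_1,I_2}$ at this witness is a product of two finite sets, hence finite.

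The main obstacle is the witness step: justifying zero-dimensionality of the overdetermined systems ($n+\ell_1$ equations in $n$ variables for $V^X_{I_1}$, and similarly for $V^Y_{I_2}$) in the presence of the auxiliary polynomials $G_1(x),H_1(y)$ from \reff{G1G=H1H=I}. One must rule out that this structure forces hidden positive-dimensional components; the cleanest route is to quote the generic-finiteness results for polynomial KKT systems from \cite{LagExp}, which already incorporate the $G_1,H_1$-structure under nonsingularity of $g,h$. Once that input is granted, the branching and the semicontinuity step are essentially formal, and the theorem follows by taking the complement of a finite union of proper Zariski-closed bad sets.
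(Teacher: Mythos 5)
Your branching on active constraint sets is correct and the decoupled witness $F_0 = f_1(x)+f_2(y)$ is a nice reduction idea, but the semicontinuity step that carries the whole weight of the argument does not hold as stated, and the gap is not cosmetic. Upper semicontinuity of fibre dimension (Chevalley) is a statement on the \emph{source}: for the projection $\pi\colon\mathcal{I}\to\mathscr{P}$ from your incidence variety, the set of $(x,y,p)\in\mathcal{I}$ where $\dim_{(x,y)}\pi^{-1}(p)\ge 1$ is closed in $\mathcal{I}$. Its image in $\mathscr{P}$ --- your $B_{I_1,I_2}$ --- is only \emph{constructible}, not Zariski-closed, because $\pi$ is not proper: the fibres sit in affine space and can escape to infinity as the coefficients vary. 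A single witness with a finite fibre therefore does not force $B_{I_1,I_2}$ into a proper closed subset. Concretely, for the toy family $\mathcal{I}=\{(x,y,t)\in\cpx^2\times\cpx : tx=1\}$ with $\pi$ the projection to $t$, every fibre over $t\ne 0$ is a line while the fibre over $t=0$ is empty; the bad locus $\{t\ne 0\}$ is Zariski-open and dense, and the ``witness'' $t=0$ lies in its nowhere-dense complement. Nothing in your argument rules out this escape-to-infinity behaviour for the KKT fibres as $(F,g,h)$ varies, and your witness is moreover confined to the proper linear subspace of decoupled objectives, which makes the conclusion even weaker.

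This is exactly the failure mode the paper's proof is built to avoid. The paper bi-homogenizes $F,g,h$, works in $\P^n\times\P^m$ where the relevant projections are proper, applies Bertini's theorem to the projectivized constraint variety $\mc{U}$ and its section by $\{\tilde F = 0\}$, and then uses Euler's identities in a case analysis on $x_0,y_0$ to show that an infinite critical locus would force a singularity on $\mc{V}$ or on one of its linear sections --- a contradiction. The homogenization is not a cosmetic choice: it is precisely what turns the uncontrolled at-infinity behaviour in your affine semicontinuity argument into a tractable boundary analysis. To salvage your approach you would have to compactify the incidence correspondence fibrewise (thereby reintroducing the at-infinity case analysis you were trying to avoid) or replace the witness step with a genuine degree/dimension count. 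A smaller point: \cite{LagExp} supplies the Lagrange-multiplier expressions and genericity of nonsingularity, but the generic finiteness of single-variable complex KKT sets that your decoupled witness relies on is really a result of the type proved in \cite{NR09}, and you should cite that instead.
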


The proof for Theorem~\ref{thm:Cktpt:fnt} will be given in Section~\ref{sc:proof}.
One would like to know
what is the number of complex solutions
to the polynomial system~\reff{cpxkkt:Fxy} for generic polynomials $F,g,h$.
That number is an upper bound for $|\mc{K}_0|$
and so is also an upper bound for the number of iterations required
by Algorithm~\ref{alg:pop:kt} to terminate.
The following theorem gives an upper bound for $|\mc{K}_0|$.

\begin{theorem}  \label{thm:iter:bound}
For the degrees $a_i, b_j$ as in Theorem~\ref{thm:Cktpt:fnt},  let
\be \label{upbd:M}
M \, := \,
\sum_{ \substack{
\{i_1, \ldots, i_{r_1} \} \subseteq [\ell_1], 0\leq r_1 \leq n \\
\{j_1, \ldots, j_{r_2} \} \subseteq [\ell_2], 0\leq r_2 \leq m
}}
a_{i_1} \cdots a_{ i_{r_1} } b_{j_1} \cdots b_{ j_{r_2} }
\cdot s
\ee
where in the above the number $s$ is given as
\[
s = \sum_{ \substack{
k_0 + \cdots + k_{r_1+r_2} = n+m-r_1-r_2 \\
k_0, \ldots, k_{r_1+r_2} \in \N
}}
(a_0+b_0)^{k_0}
(a_{i_1})^{k_1} \cdots (a_{ i_{r_1} })^{k_{r_1}}
(b_{j_1})^{ k_{r_1+1} } \cdots (b_{ j_{r_2} })^{ k_{r_1+r_2} } .
\]
If $F(x,y)$, $g_i$, $h_j$ are generic, then \reff{cpxkkt:Fxy}
has at most $M$ complex solutions,
and hence Algorithm~\ref{alg:pop:kt} must terminate within $M$ iterations.
\end{theorem}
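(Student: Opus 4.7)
The plan is to bound the complex solution count of \reff{cpxkkt:Fxy} by stratifying along the complementarity pattern and computing an intersection number on each stratum, then summing.

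First, for each pair of subsets $I=\{i_1,\ldots,i_{r_1}\}\subseteq[\ell_1]$ and $J=\{j_1,\ldots,j_{r_2}\}\subseteq[\ell_2]$, let $V_{I,J}$ denote the complex solutions of \reff{cpxkkt:Fxy} at which $g_i(x)=0$ for $i\in I$ and $\lmd_i(x,y)=0$ for $i\notin I$, with the analogous condition for $J$. The relations $\lmd_i(x,y)g_i(x)=0$ and $\mu_j(x,y)h_j(y)=0$ force every solution into at least one $V_{I,J}$, so it suffices to bound $|V_{I,J}|$ for each pair and sum. For $r_1>n$ or $r_2>m$, the stratum is empty by dimension for generic $g,h$, which matches the summation range in \reff{upbd:M}.

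Second, on $V_{I,J}$ the equations reduce to the $r_1+r_2$ active constraint equations $g_{i_k}=0,\,h_{j_k}=0$ (of $x$-degrees $a_{i_k}$ and $y$-degrees $b_{j_k}$), together with the rank-drop conditions
\[
\nabla_xF(x,y)\in\mathrm{span}\{\nabla_xg_{i_k}(x):k=1,\ldots,r_1\},\quad
\nabla_yF(x,y)\in\mathrm{span}\{\nabla_yh_{j_k}(y):k=1,\ldots,r_2\},
\]
which are exactly the content of the gradient equations in \reff{cpxkkt:Fxy} once the inactive Lagrange polynomials $\lmd_i,\mu_j$ are set to zero. I would take the multi-projective closure of $V_{I,J}$ in $\P^n\times\P^m$ and, using genericity (a Bertini argument plus the nonsingularity of $g,h$), realize it as a transverse complete intersection of the active-constraint hypersurfaces with a Lagrange rank-drop determinantal subvariety, with no solutions at infinity.

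Third, I compute the resulting intersection number. The active-constraint hypersurfaces contribute $\prod_{k=1}^{r_1}a_{i_k}\cdot\prod_{k=1}^{r_2}b_{j_k}$, matching the leading product in \reff{upbd:M}. Applying the Thom--Porteous / Chern class formula on $\P^n\times\P^m$ for the degeneracy locus of the vector-bundle map with column bidegrees $(a_0-1,b_0),(a_{i_k}-1,0)$ in the $x$-block and $(a_0,b_0-1),(0,b_{j_k}-1)$ in the $y$-block yields that the contribution of the rank-drop locus is precisely the complete homogeneous symmetric polynomial $s=h_{n+m-r_1-r_2}(a_0+b_0,a_{i_1},\ldots,a_{i_{r_1}},b_{j_1},\ldots,b_{j_{r_2}})$ that appears in \reff{upbd:M}. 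Summing $\prod a_{i_k}\prod b_{j_k}\cdot s$ over all admissible $(I,J)$ then gives $M$, and Theorem~\ref{thm:fnt:iter} together with $|\mc{K}_0|\le M$ yields the iteration bound. The main obstacle will be the third step: justifying that the intersection-theoretic contribution of the Lagrange rank-drop variety is exactly the complete homogeneous symmetric polynomial with the merged argument $a_0+b_0$ (rather than $a_0,b_0$ entering separately) requires a careful multi-graded Chern/Segre class computation that reflects the coupling of the $x$- and $y$-rank conditions through the shared function $F(x,y)$; a secondary point is verifying the genericity-based transversality and absence of intersections at infinity in the multi-projective closure.
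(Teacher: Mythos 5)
Your stratification by the complementarity pattern (active set) is exactly what the paper does, and the observation that $r_1 > n$ or $r_2 > m$ gives empty strata explains the summation range. But after that the two arguments part ways, and yours is, as you acknowledge, incomplete.

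The paper does no Chern-class or Thom--Porteous computation at all. It treats $(x,y)$ as a single variable $z \in \re^{n+m}$, notes that \reff{cpxkkt:Fxy} coincides with the usual KKT system for minimizing $F$ over $X\times Y$ (because $\nabla_y g_i \equiv 0$ and $\nabla_x h_j \equiv 0$, the two block equations are precisely the $x$- and $y$-components of one combined Lagrangian stationarity condition), and then for a fixed active set $\{i_1,\ldots,i_{r_1}\}\cup\{j_1,\ldots,j_{r_2}\}$ simply invokes Theorem~2.2 of Nie--Ranestad \cite{NR09} on algebraic degree of polynomial optimization, applied in $\re^{n+m}$ with an objective of total degree $a_0+b_0$ and active constraints of degrees $a_{i_k}$, $b_{j_k}$. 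That one citation produces the factor $a_{i_1}\cdots a_{i_{r_1}} b_{j_1}\cdots b_{j_{r_2}}\cdot s$ per stratum; summing gives $M$. The ``merged'' argument $a_0+b_0$ in $s$ is therefore not mysterious: it is just the total degree of $F$ seen in a single projective space $\P^{n+m}$, not a bigraded quantity on $\P^n\times\P^m$.

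Your proposal instead sets up a bigraded degeneracy-locus computation on $\P^n\times\P^m$, exploiting the bidegree $(a_0,b_0)$ structure of $F$. This is a genuinely different route, and if carried out it would use strictly more information (the bidegree, not just the total degree) and might yield a sharper bound. However, you flag the key step as a gap and do not supply the computation; and, as you yourself note, a bigraded Thom--Porteous/Chern--Segre calculation would naturally produce expressions in which $a_0$ and $b_0$ enter separately, not as $a_0+b_0$. To recover the stated bound $M$ from that route you would either need to show your resulting expression is $\le M$ term-by-term, or abandon the bigraded approach and pass to total degree in $\P^{n+m}$ --- which is precisely the shortcut the paper takes by citing \cite{NR09}. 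As written, your proof has the right decomposition but is missing the per-stratum count, which is the whole content of the theorem.
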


The proof for Theorem~\ref{thm:iter:bound}
will be given in Section~\ref{sc:proof}.
We remark that the upper bound
$M$ given in \reff{upbd:M} is not sharp.
In our computational practice, Algorithm~\ref{alg:pop:kt}
typically terminates after a few iterations.
It is an interesting question to obtain accurate upper bounds
for the number of iterations required by Algorithm~\ref{alg:pop:kt} to terminate.

\section{Solving optimization problems}
\label{sc:slvopt}

We discuss how to solve the optimization problems
that appear in Algorithm~\ref{alg:pop:kt}.
Under some genericity assumptions on $F,g,h$,
we show that their optimizers can be computed
by solving Lasserre type semidefinite relaxations.
Let $X,Y$ be feasible sets given as in \reff{set:X}-\reff{set:Y}.
Assume $g,h$ are nonsingular, so $\lmd(x,y), \mu(x,y)$
can be expressed as in \reff{lmd(x,y)}-\reff{mu(x,y)}.

\subsection{The upper level optimization}

The optimization problem~\reff{min:F(xy):kt}
is a special case of \reff{minF:KT:uv},
with $K_1=K_2 = \emptyset$.
It suffices to discuss how to solve \reff{minF:KT:uv}
with finite sets $K_1,K_2$.
For convenience, we rewrite \reff{minF:KT:uv} explicitly as
\be  \label{optF:KKT:xy}
\left\{ \baray{cl}
\min\limits_{(x,y)} &  F(x,y) \\
\mbox{subject to} & \nabla_x F(x,y) - \sum_{i \in \mc{E}^X} \lmd_i(x,y) \nabla_x g_i(x) = 0, \\
     & \nabla_y F(x,y) - \sum_{j \in \mc{E}^Y} \mu_j(x,y) \nabla_y h_j(y) = 0, \\
     &  g_i(x)  = 0 ,\,  h_j(y) = 0 \, ( i \in \mc{E}_1^X,  j \in \mc{E}_1^Y), \\
     & \lmd_i(x,y)  g_i(x)  = 0, \,
\mu_j(x,y) h_j(y)  = 0 \, (i \in \mc{E}_2^X, j \in \mc{E}_2^Y),   \\
     & g_i(x) \geq 0,\, \lmd_i(x,y) \geq 0 \, (i \in \mc{E}_2^X ), \\
     & h_j(y) \geq 0,\, -\mu_j(x,y)  \geq 0 \, ( j \in \mc{E}_2^Y),  \\
     &  F(u,y) - F(x,y) \geq 0 \, (\forall \, u \in K_1),  \\
     &  F(x,y) - F(x,v) \geq 0 \, (\forall \, v \in K_2).  \\
\earay \right.
\ee
Recall that $\lmd_i(x,y)$, $\mu_j(x,y)$ are Lagrange polynomials
as in \reff{lmd(x,y)}-\reff{mu(x,y)}.
Denote by $\phi$ the tuple of equality constraining polynomials
\begin{multline}  \label{phi:xy}
\phi := \Big\{
     \nabla_x F -{\sum}_{i \in \mc{E}^X} \lmd_i(x,y) \nabla_x g_i \Big\}
\cup \Big\{ \nabla_y F - {\sum}_{j \in \mc{E}^Y} \mu_j(x,y) \nabla_y h_j \Big\} \\
\cup \Big\{ g_i, h_j \Big\}_{ i \in \mc{E}_1^X, j \in \mc{E}_1^Y }
\cup \Big\{ \lmd_i(x,y)  g_i,\,
  \mu_j(x,y) h_j \Big\}_{i \in \mc{E}_2^X, j \in \mc{E}_2^Y  } ,
\end{multline}
and denote by $\psi$ the tuple of inequality constraining ones
\begin{multline}  \label{psi:xy}
\psi := \Big\{
g_i, \, h_j, \, \lmd_i(x,y), \, -\mu_j(x,y)
\Big\}_{i \in \mc{E}_2^X, j \in \mc{E}_2^Y } \cup \\
\Big\{ F(u,y)-F(x,y), \, F(x,y)-F(x,v) \Big\}_{u \in K_1, \, v \in K_2}.
\end{multline}
They are polynomials in $(x,y)$. Let
\be \label{d0:phi:psi}
d_0 \, := \, \big \lceil \frac{1}{2}\max\{\deg{F(x,y)},
 \deg(\phi), \deg(\psi)\} \big \rceil.
\ee
Then, the optimization problem~\reff{optF:KKT:xy} can be simply written as
\be \label{minF(xy):phi:psi}
\left\{ \baray{rl}
f_{\ast}:= \min & F(x,y) \\
\mbox{subject to} &  \phi(x,y) = 0, \, \psi(x,y) \geq 0.
\earay \right.
\ee
We apply Lasserre's hierarchy of semidefinite relaxations
to solve \reff{minF(xy):phi:psi}.
For integers $k=d_0, d_0+1,\cdots$,
the $k$th order semidefinite relaxation is
\be  \label{<F,w>:lasrlx:ordk}
\left\{ \baray{rl}
F_k:= \min & \langle F, w \rangle \\
\mbox{subject to} &   (w)_0 = 1, \, M_k(w) \succeq 0, \\
 &  L^{(k)}_{\phi} (w) = 0, L^{(k)}_{\psi} (w) \succeq 0, \,
   w \in \re^{ \N_{2k}^{n+m} }.
\earay \right.
\ee
The number $k$ is called a relaxation order.
We refer to \reff{LocMat} for the
localizing and moment matrices used in \reff{<F,w>:lasrlx:ordk}.
\begin{alg} \label{alg:LaSDP:xy}
(An algorithm for solving the optimization \reff{optF:KKT:xy}.)
\bit

\item[\textbf{Input:}] Polynomials $F, \phi, \psi$ as in \reff{phi:xy}-\reff{psi:xy}.

\item [Step~0:] Let $k:=d_0$.

\item [Step~1:] Solve the semidefinite relaxation \reff{<F,w>:lasrlx:ordk}.

\item [Step~2:] If the relaxation \reff{<F,w>:lasrlx:ordk} is infeasible,
then \reff{dfsad:F:XY} has no saddle points and stop;
otherwise, solve it for a minimizer $w^*$. Let $t := d_0$.

\item [Step~3] Check whether or not $w^*$ satisfies the rank condition
\be \label{cond:flat}
\rank \, M_{t} (w^*)  \, = \, \rank \, M_{t-d_0} (w^*).
\ee

\item [Step~4]
If \reff{cond:flat} holds, extract $r := \rank \, M_{t} (w^*)$
minimizers for \reff{optF:KKT:xy} and stop.

\item [Step~5] If $t< k$, let $t:=t+1$ and go to Step~3;
otherwise, let $k:=k+1$ and go to Step~1.

\item[\textbf{Output:}]
Minimizers of the optimization problem \reff{optF:KKT:xy}
or a certificate for the infeasibility of \reff{optF:KKT:xy}.

\eit

\end{alg}

The conclusions in the Steps~2 and 3
are justified by the following Proposition~\ref{prop:Alg4.1}.
The rank condition~\reff{cond:flat} is called
flat extension or flat truncation \cite{CuFi05,Nie-ft}.
It is a sufficient and also almost necessary criterion
for checking convergence of Lasserre type relaxations \cite{Nie-ft}.
When it is satisfied, the method in \cite{HenLas05}
can be applied to extract minimizers in Step~4.
It was implemented in the software {\tt GloptiPoly~3} \cite{GloPol3}.

\begin{prop} \label{prop:Alg4.1}
Suppose $g,h$ are nonsingular polynomial tuples.
For the hierarchy of relaxations~\reff{<F,w>:lasrlx:ordk},
we have the properties:

\bit

\item [i)] If \reff{<F,w>:lasrlx:ordk} is infeasible for some $k$,
then \reff{optF:KKT:xy} is infeasible
and \reff{dfsad:F:XY} has no saddle points.

\item [ii)] If \reff{<F,w>:lasrlx:ordk} has a minimizer $w^*$
satisfying \reff{cond:flat},
then $F_k = f_{\ast}$ and there are
$r := \rank \, M_{t} (w^*)$ minimizers for \reff{optF:KKT:xy}.

\eit

\end{prop}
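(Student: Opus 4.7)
The plan is to establish both assertions through the standard machinery of Lasserre semidefinite relaxations; the nonsingularity hypothesis on $g,h$ enters only to ensure that $\lmd(x,y)$ and $\mu(x,y)$ in \reff{lmd(x,y)}-\reff{mu(x,y)} are polynomials, so that $\phi$ and $\psi$ in \reff{phi:xy}-\reff{psi:xy} are genuine polynomial tuples and \reff{<F,w>:lasrlx:ordk} is a legitimate SDP. Throughout I would use the description \reff{minF(xy):phi:psi} of \reff{optF:KKT:xy} in the compact form $\min F$ s.t.\ $\phi=0, \psi\geq 0$.

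For part (i), I would argue the contrapositive. If $(x_0,y_0)$ is any feasible point of \reff{optF:KKT:xy}, then the truncated moment sequence $w \in \re^{\N_{2k}^{n+m}}$ associated with the Dirac measure $\delta_{(x_0,y_0)}$, namely $w_\af = (x_0,y_0)^\af$, satisfies $(w)_0=1$, $M_k(w) = [(x_0,y_0)]_k[(x_0,y_0)]_k^T \succeq 0$, and $L_\phi^{(k)}(w) = 0$, $L_\psi^{(k)}(w) \succeq 0$, since localizing matrices applied to $\delta_{(x_0,y_0)}$ are rank-one multiples of $\phi(x_0,y_0)=0$ and $\psi(x_0,y_0)\geq 0$. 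Hence $w$ is feasible for \reff{<F,w>:lasrlx:ordk}, contradicting its infeasibility. This proves \reff{optF:KKT:xy} has no feasible point. The conclusion for saddle points then follows from the derivation in Section~\ref{ssc:opcd}: every saddle point $(x^*,y^*)$ of \reff{dfsad:F:XY} satisfies the KKT system \reff{SadKKT:XY} with multipliers expressed via \reff{lmd(x,y)}-\reff{mu(x,y)}, and by definition of saddle point it automatically satisfies $F(u,y^*)\geq F(x^*,y^*)$ and $F(x^*,v)\leq F(x^*,y^*)$ for all $u\in K_1 \subseteq X$, $v\in K_2 \subseteq Y$. Thus every saddle point is feasible for \reff{optF:KKT:xy}, so infeasibility of the latter precludes any saddle point.

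For part (ii), I would invoke the Curto--Fialkow flat extension theorem \cite{CuFi05,Nie-ft} and the atomic measure extraction of Henrion and Lasserre \cite{HenLas05}. Under \reff{cond:flat}, the tms $w^*$ admits a unique $r$-atomic representing measure
\[
\nu \; = \; \sum_{i=1}^r c_i \, \delta_{(x^{(i)}, y^{(i)})}, \quad c_i > 0, \quad \sum_{i=1}^r c_i = 1,
\]
with $r = \rank M_t(w^*)$ and the points $(x^{(i)},y^{(i)})$ pairwise distinct. The conditions $L^{(k)}_\phi(w^*) = 0$ and $L^{(k)}_\psi(w^*) \succeq 0$, combined with the flatness, force $\phi(x^{(i)},y^{(i)})=0$ and $\psi(x^{(i)},y^{(i)}) \geq 0$ for each $i$, so every atom is feasible for \reff{optF:KKT:xy}. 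Consequently $F(x^{(i)},y^{(i)}) \geq f_\ast$ for all $i$, and
\[
F_k \; = \; \langle F, w^* \rangle \; = \; \sum_{i=1}^r c_i F(x^{(i)}, y^{(i)}) \; \geq \; f_\ast.
\]
Since \reff{<F,w>:lasrlx:ordk} is a relaxation of \reff{minF(xy):phi:psi}, one also has $F_k \leq f_\ast$, so equality holds throughout and each atom attains $F(x^{(i)},y^{(i)}) = f_\ast$, i.e., each of the $r$ atoms is a minimizer of \reff{optF:KKT:xy}.

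The main technical ingredient, and the only nontrivial step, is the flat-extension conclusion that $w^*$ is represented by an $r$-atomic measure supported in the feasible set; this is not re-proved but imported from \cite{CuFi05,HenLas05,Nie-ft}. The only bookkeeping subtlety is checking that the added constraints in \reff{phi:xy}-\reff{psi:xy} indexed by $K_1, K_2$ (which depend on prior iterations) are polynomial in $(x,y)$ of the declared degrees, so that the localizing matrix relations indeed certify feasibility of every atom; this is immediate since $u \in K_1$ and $v \in K_2$ are constants, making $F(u,y)-F(x,y)$ and $F(x,y)-F(x,v)$ polynomials in $(x,y)$.
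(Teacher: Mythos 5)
Your proof is correct and follows essentially the same approach as the paper's, which is considerably terser: for (i) the paper simply notes that $[(u,v)]_{2k}$ is feasible for the relaxation whenever $(u,v)$ is feasible for \reff{optF:KKT:xy}, and for (ii) it cites the classical results \cite{CuFi05,HenLas05,Lau05} without unpacking them. You fill in the standard machinery (Dirac moments, Curto--Fialkow flat extension, support of the atomic measure in the feasible set, sandwiching $F_k = f_\ast$), and you also make explicit why every saddle point satisfies the added $K_1$, $K_2$ constraints, a point the paper leaves implicit.
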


\begin{proof}
Since $g,h$ are nonsingular,
every saddle point must be a critical point,
and Lagrange multipliers can be expressed as in
\reff{lmd(x,y)}-\reff{mu(x,y)}.

i) For each $(u,v)$ that is feasible for \reff{optF:KKT:xy},
$[(u,v)]_{2k}$ satisfies all the constraints of
\reff{<F,w>:lasrlx:ordk}, for all $k$.
Therefore, if \reff{<F,w>:lasrlx:ordk} is infeasible for some $k$,
then \reff{optF:KKT:xy} is infeasible.

ii) The conclusion follows from the classical results in
\cite{CuFi05,HenLas05,Lau05,Nie-ft}.
\end{proof}

We refer to \reff{df:IQ(h,g)} for the notation $\mbox{IQ}$,
which is the sum of an ideal and a quadratic module.
The polynomial tuples $\phi,\psi$ are from \reff{phi:xy}-\reff{psi:xy}.
Algorithm~\ref{alg:LaSDP:xy} is able to solve \reff{optF:KKT:xy}
successfully after finitely many iterations,
under the following genericity conditions.

\begin{cond} \label{cond:AC:FXYuv}
The polynomial tuples $g,h$ are nonsingular and $F,g,h$
satisfy one (not necessarily all) of the following:
\bit

\item [(1)] $\mbox{IQ}(g_{eq},g_{in})+\mbox{IQ}(h_{eq}, h_{in})$ is archimedean;

\item [(2)] the equation $\phi(x,y)=0$ has
finitely many real solutions;

\item [(3)] $\mbox{IQ}(\phi,\psi)$ is archimedean.

\eit

\end{cond}

In the above, the item (1) is almost the same as that
$X,Y$ are compact sets; the item (2) is the same as that
\reff{cpxkkt:Fxy} has only finitely many real solutions.
Also note that the item (1) or (2) implies (3).
In Theorem~\ref{thm:Cktpt:fnt}, we have shown that \reff{cpxkkt:Fxy}
has only finitely many complex solutions when $F,g,h$ are generic.
Therefore, Condition~\ref{cond:AC:FXYuv} holds generically.
Under Condition~\ref{cond:AC:FXYuv}, Algorithm~\ref{alg:LaSDP:xy}
can be shown to have finite convergence.

\begin{theorem} \label{thm:Lascvg:xy}
Under Condition~\ref{cond:AC:FXYuv}, we have that:
\bit

\item [i)] If the problem~\reff{optF:KKT:xy} is infeasible,
then the semidefinite relaxation \reff{<F,w>:lasrlx:ordk}
must be infeasible for all $k$ big enough.

\item [ii)]
Suppose \reff{optF:KKT:xy} is feasible.
If \reff{optF:KKT:xy} has only finitely many minimizers
and each of them is an isolated critical point
(i.e., an isolated real solution of \reff{cpxkkt:Fxy}),
then, for all $k$ big enough, \reff{<F,w>:lasrlx:ordk}
has a minimizer and each minimizer must satisfy
the rank condition \reff{cond:flat}.

\eit
\end{theorem}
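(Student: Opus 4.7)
The plan is to prove parts (i) and (ii) by producing appropriate Positivstellensatz-type certificates in the truncated cone $\mbox{IQ}(\phi,\psi)_{2k}$ and then passing these algebraic identities through the Riesz functional $\mathscr{R}_w$ to the semidefinite relaxation. Recall that, by nonsingularity of $g,h$, the equations $\phi=0$ encode the KKT system with Lagrange multipliers expressed via the polynomials $\lmd(x,y),\mu(x,y)$ in \reff{lmd(x,y)}-\reff{mu(x,y)}, so the feasible set of \reff{optF:KKT:xy} is exactly the set of KKT points intersected with the sign constraints in $\psi$.

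For part (i), suppose the real set $T:=\{(x,y):\phi(x,y)=0,\,\psi(x,y)\geq 0\}$ is empty. The approach is to establish $-1\in\mbox{IQ}(\phi,\psi)_{2k}$ for some finite $k$ in each of the three cases of Condition~\ref{cond:AC:FXYuv}. In case~(3) this is immediate from Putinar's Positivstellensatz applied to the (vacuously strictly positive on $T$) constant $-1$. In case~(1), archimedean-ness of $\mbox{IQ}(g_{eq},g_{in})+\mbox{IQ}(h_{eq},h_{in})$ yields a ball inequality $R-\|x\|^2-\|y\|^2\geq 0$ in that combined module for some large $R$; adjoining this inequality to $\psi$ does not change $T$ and renders $\mbox{IQ}(\phi,\psi)$ archimedean, reducing to case~(3). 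In case~(2), since $\mc{V}_{\re}(\mbox{Ideal}(\phi))$ is finite and each of its points violates some $\psi_j\geq 0$, a zero-dimensional Lagrange interpolation argument (in the spirit of the real Nullstellensatz for radical ideals with finite real variety) produces the certificate. Once $-1\in\mbox{IQ}(\phi,\psi)_{2k}$, any feasible $w$ for \reff{<F,w>:lasrlx:ordk} would give $-1=\mathscr{R}_w(-1)\geq 0$ via $M_k(w)\succeq 0$, $L^{(k)}_{\phi}(w)=0$, $L^{(k)}_{\psi}(w)\succeq 0$, and $(w)_0=1$, a contradiction, so \reff{<F,w>:lasrlx:ordk} is infeasible.

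For part (ii), the strategy is to invoke the finite convergence theorem for Lasserre's hierarchy at isolated critical points established in \cite{Nie-opcd}. First secure an archimedean quadratic module containing $\mbox{IQ}(\phi,\psi)$: case~(1) achieves this after adjoining a ball inequality (exactly as in part (i)); case~(3) is direct; and case~(2) reduces \reff{optF:KKT:xy} to optimization over a finite set $\mc{V}_{\re}(\mbox{Ideal}(\phi))\cap\{\psi\geq 0\}$, for which Lasserre's hierarchy is classically known to converge finitely via the zero-dimensional Nullstellensatz machinery. With archimedean-ness in hand and every minimizer of \reff{optF:KKT:xy} assumed to be an isolated real solution of \reff{cpxkkt:Fxy} (hence an isolated KKT point), \cite{Nie-opcd} guarantees that for all $k$ large the relaxation \reff{<F,w>:lasrlx:ordk} attains its optimum with $F_k=f_\ast$ and admits a minimizer $w^*$ satisfying the flat truncation condition~\reff{cond:flat}; the extraction of $r:=\rank\,M_t(w^*)$ minimizers then follows from \cite{CuFi05,HenLas05,Lau05}.

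The main obstacle is the unified handling of cases~(1), (2), (3), since $\mbox{IQ}(\phi,\psi)$ itself need not be archimedean in (1) or (2). The delicate case is (2): the argument must bypass Putinar entirely and rely on the zero-dimensional structure of $\mbox{Ideal}(\phi)$, producing both the infeasibility certificate in (i) and the flat extension in (ii) through radical ideal membership in the truncated ideal $\mbox{Ideal}(\phi)_{2k}$. A secondary technical point, which is essential for (ii), is that the isolated-minimizer hypothesis is precisely what upgrades the general asymptotic convergence of Lasserre's hierarchy to finite convergence with a rank certificate; without it, flat extension at finite order cannot be guaranteed.
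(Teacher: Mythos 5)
Your part (i) is on the right track but overcomplicated. The paper first observes that in Condition~\ref{cond:AC:FXYuv}, items (1) and (2) each \emph{imply} item (3), because $g_{eq},h_{eq}$ are among the equality constraints $\phi$ and $g_{in},h_{in}$ are among the inequality constraints $\psi$, so $\mbox{IQ}(g_{eq},g_{in})+\mbox{IQ}(h_{eq},h_{in})\subseteq\mbox{IQ}(\phi,\psi)$; and a finite real variety of $\phi$ is contained in a ball, forcing a ball-type certificate into $\mbox{Ideal}(\phi)+\Sig[x,y]$. With that single reduction, one invokes the Positivstellensatz and Putinar's theorem once, gets $-1\in\mbox{IQ}(\phi,\psi)_{2k}$, and kills feasibility via weak duality. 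You instead adjoin an auxiliary ball constraint to $\psi$ in case (1) — this changes the relaxation \reff{<F,w>:lasrlx:ordk} (which is built from $\psi$, not from an augmented $\tilde\psi$), so you would still owe an argument transporting the certificate back to the original $\psi$; and your case (2) ``zero-dimensional Lagrange interpolation'' sketch is not a proof.

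Part (ii) has a genuine gap. You invoke \cite{Nie-opcd} as a black box, but that theorem's finite-convergence conclusion requires the standard nonlinear-programming optimality conditions (constraint qualification, strict complementarity, second-order sufficiency) to hold at each minimizer of the optimization problem whose relaxation you are analyzing — here the problem \reff{optF:KKT:xy} with constraints $\phi=0,\psi\geq 0$. The hypothesis actually available is only that each minimizer of \reff{optF:KKT:xy} is an \emph{isolated real solution of \reff{cpxkkt:Fxy}}, i.e., an isolated point of the KKT variety. This does not deliver LICQ, strict complementarity, or second-order sufficiency for the KKT-augmented problem, so \cite{Nie-opcd} does not apply as stated. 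The paper circumvents this entirely: using Lemma~3.3 of \cite{DNP} it shows $F$ takes only finitely many values on $\{\phi=0\}$, then constructs an explicit polynomial $P(x,y)$ (a product of squared distances to the non-optimal critical values and to the finitely many minimizers), sets $q:=\min(\eps_1,\eps_2)-P$, shows $q>0$ on the feasible set and $q\in\mbox{IQ}(\phi,\psi)$ by Putinar, reformulates \reff{optF:KKT:xy} equivalently with the single inequality $q\geq 0$, and proves $F_k=f_\ast$ at a finite order via a direct Positivstellensatz computation (using \cite[Cor.~4.1.8]{BCR} and Lemma~2.1 of \cite{POPreal}); flat truncation then follows from Theorem~2.6 of \cite{Nie-ft} together with \cite[Lemma~2.5]{Nie-ft} to transfer minimizers of the original relaxation to the $q$-relaxation. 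This explicit auxiliary-constraint construction is the essential content of the proof and is missing from your proposal.
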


We would like to remark that when $F,g,h$ are generic,
every minimizer of \reff{optF:KKT:xy}
is an isolated real solution of \reff{cpxkkt:Fxy}.
This is because \reff{cpxkkt:Fxy} has only finitely many complex solutions
for generic $F,g,h$. Therefore,
Algorithm~\ref{alg:LaSDP:xy} has finite convergence for generic cases.
We would also like to remark that Proposition~\ref{prop:Alg4.1}
and Theorem~\ref{thm:Lascvg:xy} assume that
the semidefinite relaxation~\reff{<F,w>:lasrlx:ordk}
is solved exactly. However,
semidefinite programs are usually solved numerical (e.g., by {\tt SeDuMi}),
for better computational performance.
Therefore, in computational practice,
the optimizers obtained by Algorithm~\ref{alg:LaSDP:xy}
are correct up to numerical errors.
This is a common feature of all numerical methods.

\subsection{Lower level minimization}
\label{ssc:lowmin}

For a given pair $(x^*,y^*)$
that is feasible for \reff{min:F(xy):kt} or \reff{minF:KT:uv},
we need to check whether or not $x^*$ is a minimizer of $F(x,y^*)$
over $X$. This requires us to solve the minimization problem
\be \label{minF(xy*):X}
\left\{ \baray{cl}
\min\limits_{x \in \re^n} & F(x,y^*) \\
\mbox{subject to} &   g_i(x)  = 0 \, (i \in \mc{E}_1^X ), \\
& g_i(x)  \geq 0 \, (i \in \mc{E}_2^X ).
\earay \right.
\ee
When $g$ is nonsingular, if it has a minimizer,
the optimization \reff{minF(xy*):X} is equivalent to
(by adding necessary optimality conditions)
\be \label{F(x,y*):min}
\left\{ \baray{cl}
\min\limits_{x \in \re^n}   &  F(x,y^*) \\
\mbox{subject to} & \nabla_x F(x,y^*) - \sum\limits_{i \in \mc{E}^X}
                         \lmd_i(x,y^*) \nabla_x g_i(x) = 0, \\
     &  g_i(x)  = 0 \, (i \in \mc{E}_1^X ), \,
     \lmd_i(x,y^*)  g_i(x)  = 0 \, (i \in \mc{E}_2^X ),\\
     &  g_i(x)  \geq 0,\, \lmd_i(x,y^*) \geq 0 \, (i \in \mc{E}_2^X ). \\
\earay \right.
\ee
Denote the tuple of equality constraining polynomials
\begin{multline} \label{phi(x):y*}
\phi_{y^*} \, := \, \Big\{
\nabla_x F(x,y^*) - {\sum}_{i \in \mc{E}^X}
\lmd_i(x,y^*) \nabla_x g_i \Big\}  \\
\cup \big\{g_i \big\}_{i \in \mc{E}_1^X } \cup \big\{
\lmd_i(x,y^*) \cdot g_i \big\}_{i \in \mc{E}_2^X },
\end{multline}
and denote the tuple of inequality ones
\be \label{psi(x):y*}
\psi_{y^*} \, := \, \Big\{
g_i,\, \lmd_i(x,y^*) \Big\}_{i \in \mc{E}_2^X }.
\ee
They are polynomials in $x$ but not in $y$,
depending on the value of $y^*$. Let
\be \label{d1:phi:psi}
d_1 \, := \, \big \lceil \frac{1}{2}\max\{\deg{F(x,y^*)},
          \deg(\phi_{y^*}), \deg(\psi_{y^*})\} \big \rceil.
\ee
We can rewrite \reff{F(x,y*):min} equivalently as
\be \label{minF(x):phipsi:y*}
\left\{ \baray{cl}
\min\limits_{x \in \re^n}  &  F(x,y^*)  \\
 \mbox{subject to} &
\phi_{y^*}(x) = 0, \, \psi_{y^*}(x) \geq 0.
\earay \right.
\ee
Lasserre's hierarchy of semidefinite relaxations
for solving \reff{minF(x):phipsi:y*} is
\be  \label{min<F(xy*),z>}
\left\{ \baray{cl}
 \min\limits_{z}  & \langle F(x,y^*), z \rangle \\
\mbox{subject to} &   (z)_0 = 1, \, M_k(z) \succeq 0, \\
 &  L^{(k)}_{\phi_{y^*} } (z) = 0,
 L^{(k)}_{\psi_{y^*} } (z) \succeq 0, \,  z \in \re^{ \N^n_{2k}  },
\earay \right.
\ee
for relaxation orders $k = d_1, d_1+1, \cdots$.
Since $(x^*,y^*)$ is a feasible pair for
\reff{min:F(xy):kt} or \reff{minF:KT:uv},
the problems \reff{minF(xy*):X} and \reff{minF(x):phipsi:y*}
are also feasible, hence \reff{min<F(xy*),z>} is also feasible.
A standard algorithm for solving
\reff{minF(x):phipsi:y*} is as follows.

\begin{alg}  \label{alg:min:F(xy*)}
(An algorithm for solving the problem \reff{minF(x):phipsi:y*}.)

\bit

\item[\textbf{Input:}] The point $y^*$ and polynomials $F(x,y^*), \phi_{y^*}, \psi_{y^*} $ as in \reff{phi(x):y*}-\reff{psi(x):y*}.

\item [Step~0:] Let $k \, := \, d_1$.
\item [Step~1:] Solve the semidefinite relaxation
\reff{min<F(xy*),z>} for a minimizer $z^*$. Let $t := d_1$.

\item [Step~2:] Check whether or not $z^*$ satisfies the rank condition
\be \label{flatcd:F(x):X}
\rank \, M_{t} (z^*)  \, = \, \rank \, M_{t-d_1} (z^*).
\ee

\item [Step~3:] If \reff{flatcd:F(x):X} holds, extract
$r :=\rank \, M_{t} (z^*)$ minimizers and stop.

\item [Step~4:] If $t< k$, let $t:=t+1$ and go to Step~3;
otherwise, let $k:=k+1$ and go to Step~1.

\item[\textbf{Output:}] Minimizers of the optimization problem \reff{minF(x):phipsi:y*}.

\eit
\end{alg}

Similar conclusions as in Proposition~\ref{prop:Alg4.1}
hold for Algorithm~\ref{alg:min:F(xy*)}.
For cleanness of the paper, we do not state them again.
The method in \cite{HenLas05} can be applied to extract minimizers in the Step~3.
Moreover, Algorithm~\ref{alg:min:F(xy*)} also terminates within finitely
many iterations, under some genericity conditions.

\begin{cond} \label{cond:fix:y*}
The polynomial tuple $g$ is nonsingular and the point $y^*$
satisfies one (not necessarily all) of the following:

\bit

\item [(1)] $\mbox{IQ}(g_{eq}, g_{in})$ is archimedean;

\item [(2)] the equation $\phi_{y^*}(x)=0$ has
finitely many real solutions;

\item [(3)] $\mbox{IQ}(\phi_{y^*},\psi_{y^*})$ is archimedean.

\eit

\end{cond}

Since $(x^*, y^*)$ is feasible for \reff{min:F(xy):kt} or \reff{minF:KT:uv},
Condition~\ref{cond:AC:FXYuv} implies Condition~\ref{cond:fix:y*},
which also holds generically.
The finite convergence of Algorithm~\ref{alg:min:F(xy*)}
is summarized as follows.

\begin{theorem}
\label{thm:min:F(xy*)}
Assume the optimization problem \reff{minF(xy*):X} has a minimizer
and Condition~\ref{cond:fix:y*} holds.
If each minimizer of \reff{minF(xy*):X} is an isolated critical point,
then, for all $k$ big enough, \reff{min<F(xy*),z>}
has a minimizer and each of them must satisfy \reff{flatcd:F(x):X}.
\end{theorem}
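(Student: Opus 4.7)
The plan is to mirror the proof of Theorem~\ref{thm:Lascvg:xy} in the special setting where $y^*$ is held fixed and the decision variables reduce to $x$ alone, then invoke the finite convergence results for Lasserre's hierarchy developed in \cite{Nie-opcd}. First I would pass from \reff{minF(xy*):X} to its Lagrange reformulation \reff{minF(x):phipsi:y*}: since $g$ is nonsingular, every minimizer $u^*$ of \reff{minF(xy*):X} is a KKT point whose multipliers are given explicitly by $\lmd_i(u^*,y^*)$, so $u^*$ is feasible for \reff{minF(x):phipsi:y*} and realizes the same objective value, while conversely every feasible point of \reff{minF(x):phipsi:y*} is automatically feasible for \reff{minF(xy*):X}. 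The two problems therefore share their minimum value and minimizer set, and the hypothesis that every minimizer is an isolated critical point transfers verbatim to \reff{minF(x):phipsi:y*}.

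Next I would reduce each alternative in Condition~\ref{cond:fix:y*} to archimedeanness of $\mbox{IQ}(\phi_{y^*},\psi_{y^*})$. Case (3) is exactly this condition. Case (1) implies it because the equality polynomials of $X$ appear among $\phi_{y^*}$ and its inequality polynomials appear among $\psi_{y^*}$, so $\mbox{IQ}(g_{eq},g_{in})\subseteq \mbox{IQ}(\phi_{y^*},\psi_{y^*})$ and any archimedean certificate for the former is one for the latter. Under case (2) the real variety of $\phi_{y^*}$ is a finite subset of $\re^n$, hence compact, so appending a large ball constraint $R^2-\|x\|^2$ to $\psi_{y^*}$ leaves the feasible set of \reff{minF(x):phipsi:y*} unchanged while forcing archimedeanness, and the corresponding relaxation value is unaltered.

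With archimedeanness in hand, I would invoke the main theorem of \cite{Nie-opcd}: under an archimedean $\mbox{IQ}$ and the standard optimality conditions at each minimizer, Lasserre's hierarchy \reff{min<F(xy*),z>} attains the optimum at some finite order $k_0$, and every optimal moment sequence $z^*$ satisfies the flat truncation condition \reff{flatcd:F(x):X} at some $t\leq k$. The flat-extension theorem of Curto--Fialkow \cite{CuFi05} and the extraction procedure of Henrion--Lasserre \cite{HenLas05} then promote this rank identity into a finite reconstruction of the minimizers, justifying the termination of Algorithm~\ref{alg:min:F(xy*)}. The main obstacle, as in the proof of Theorem~\ref{thm:Lascvg:xy}, is to show that ``each minimizer is an isolated critical point'' supplies the precise optimality hypothesis required by \cite{Nie-opcd}, namely a local sum-of-squares certificate for $F(x,y^*)-F(u^*,y^*)$ modulo the ideal generated by $\phi_{y^*}$ at each KKT minimizer; once these local certificates are assembled at the finitely many minimizers and glued via the archimedean Positivstellensatz, finite convergence and flat truncation follow from the cited moment-matrix machinery.
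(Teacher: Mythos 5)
Your overall framing — reduce to the Lagrange reformulation \reff{minF(x):phipsi:y*}, reduce each alternative of Condition~\ref{cond:fix:y*} to archimedeanness of $\mbox{IQ}(\phi_{y^*},\psi_{y^*})$, then invoke finite-convergence machinery for Lasserre's hierarchy — matches the paper at a coarse level, and the paper indeed dispatches this theorem by saying it is Theorem~\ref{thm:Lascvg:xy} specialized to $K_1=K_2=\emptyset$ with the $y$-variables frozen. But the engine you propose to plug in at the end is the wrong one, and the gap you flag as ``the main obstacle'' is precisely where your route breaks down.

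You propose to invoke the main theorem of \cite{Nie-opcd}, which delivers finite convergence under constraint qualification, strict complementarity and a second-order sufficiency condition at each minimizer. The hypothesis available here is only that each minimizer is an \emph{isolated real critical point} (an isolated real solution of $\phi_{y^*}(x)=0$). This does not imply second-order sufficiency (a minimizer can be an isolated critical point yet degenerate), so the local SOS certificates that \cite{Nie-opcd} requires cannot be manufactured from the stated hypothesis, and the ``gluing'' step you sketch has no foundation. The paper's actual proof of Theorem~\ref{thm:Lascvg:xy} — the one Theorem~\ref{thm:min:F(xy*)} piggybacks on — avoids \cite{Nie-opcd} entirely. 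It proceeds by (a) using Lemma~3.3 of \cite{DNP} to conclude that $F$ takes only finitely many values $c_1<\cdots<c_N$ on the critical variety $\{\phi=0\}$; (b) using the isolated-critical-point hypothesis only to guarantee that the finitely many minimizers are at positive distance from the rest of the level set $\{F=c_\ell\}\cap\{\phi=0\}$, which permits the construction of the auxiliary polynomial $P$ and the strictly-positive-on-the-optimal-face polynomial $q=\min(\eps_1,\eps_2)-P$; (c) showing that the problem is equivalent to one with the single inequality $q\geq 0$, then applying the Positivstellensatz together with the $\eps$-perturbation decomposition of Lemma~2.1 of \cite{POPreal} to get $F-(f_\ast-\eps)\in\mbox{IQ}(\phi,q)_{2k_2}$ uniformly in $\eps$; and (d) invoking Lemma~2.5 and Theorem~2.6 of \cite{Nie-ft} to pass from exactness of the $q$-relaxation to flat truncation of the original one. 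Steps (a)--(d) are exactly how ``isolated critical point'' is converted into \reff{flatcd:F(x):X}; this is what your proposal would need to reproduce (in the $x$-only, $K_1=K_2=\emptyset$ setting) rather than appealing to \cite{Nie-opcd}.

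One smaller note: your treatment of case (2) of Condition~\ref{cond:fix:y*} (append a large ball to force archimedeanness) is a reasonable heuristic but requires justification that the appended constraint lies in $\mbox{IQ}(\phi_{y^*},\psi_{y^*})$ and so does not enlarge the cone; the paper simply asserts (2)$\Rightarrow$(3) without elaboration, relying on the real-variety results of \cite{POPreal}, which is the cleaner route.
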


The proof of Theorem~\ref{thm:min:F(xy*)}
will be given in Section~\ref{sc:proof}.
We would like to remark that every minimizer of \reff{minF(x):phipsi:y*}
is an isolated critical point of \reff{minF(xy*):X},
when $F,g,h$ are generic. This is implied by Theorem~\ref{thm:Cktpt:fnt}.

\subsection{Lower level maximization}
\label{ssc:lowmax}

For a given pair $(x^*, y^*)$ that is feasible for
\reff{min:F(xy):kt} or \reff{minF:KT:uv},
we need to check whether or not
$y^*$ is a maximizer of $F(x^*,y)$ over $Y$.
This requires us to solve the maximization problem
\be \label{minF(x*y):y:Y}
\left\{ \baray{cl}
\max\limits_{y \in \re^m}  & F(x^*,y) \\
\mbox{subject to}  & h_j(y) = 0 \, (j \in  \mc{E}_1^Y), \,
h_j(y) \geq 0 \, (j \in  \mc{E}_2^Y).
\earay \right.
\ee
When $h$ is nonsingular, if it has a minimizer,
the optimization \reff{minF(x*y):y:Y} is equivalent to
(by adding necessary optimality conditions) the problem
\be \label{F(x*,y):max}
\left\{ \baray{cl}
\max\limits_{y \in \re^m}  &  F(x^*,y) \\
\mbox{subject to} & \nabla_y F(x^*,y) -
    \sum_{j \in \mc{E}^Y} \mu_j(x^*,y) \nabla_y h_j(y) = 0, \\
&   h_j(y) = 0 \, ( j \in \mc{E}_1^Y), \,
\mu_j(x^*,y) \cdot  h_j(y) = 0 \, ( j \in \mc{E}_2^Y),  \\
&  h_j(y)  \geq 0, \,  -\mu_j(x^*,y) \geq 0\, ( j \in \mc{E}_2^Y).
\earay \right.
\ee
Denote the tuple of equality constraining polynomials
\begin{multline}
\label{phi(y):x*}
\phi_{x^*} := \Big\{
\nabla_y F(x^*,y) - \sum_{j \in \mc{E}^Y}
         \mu_j(x^*,y) \nabla_y h_j  \Big\}   \\
\cup \big\{ h_j \big\}_{ j \in \mc{E}_1^Y }
\cup \big\{ \mu_j(x^*,y) h_j \big\}_{ j \in \mc{E}_2^Y },
\end{multline}
and denote the tuple of inequality ones
\be
\label{psi(y):x*}
\psi_{x^*} :=
\Big\{ h_j, \, -\mu_j(x^*,y) \Big\}_{ j \in \mc{E}_2^Y }.
\ee
They are polynomials in $y$ but not in $x$,
depending on the value of $x^*$. Let
\be \label{d2:phi:psi}
d_2 \, := \, \big \lceil \frac{1}{2}\max\{\deg{F(x^*,y)},
      \deg(\phi_{x^*}), \deg(\psi_{x^*})\} \big \rceil.
\ee
Hence, \reff{F(x*,y):max} can be simply expressed as
\be \label{minF(y):ppsi:x*}
\left\{ \baray{cl}
\max\limits_{y \in \re^m} & F(x^*,y) \\
\mbox{subject to} &
\phi_{x^*}(y) = 0,\,  \psi_{x^*}(y) \geq 0.
\earay \right.
\ee
Lasserre's hierarchy of semidefinite relaxations
for solving \reff{minF(y):ppsi:x*} is
\be  \label{max:F(x*y):las}
\left\{ \baray{cl}
\max\limits_{z} & \langle F(x^*, y), z \rangle \\
\mbox{subject to} &   (z)_0 = 1, \, M_k(z) \succeq 0, \\
 &  L^{(k)}_{\phi_{x^*} } (z) = 0, L^{(k)}_{\psi_{x^*} } (z) \succeq 0, \\
 &  z \in \re^{ \N^m_{2k} },
\earay \right.
\ee
for relaxation orders $k=d_2, d_2+1, \cdots$.
Since $(x^*, y^*)$ is feasible for
\reff{min:F(xy):kt} or \reff{minF:KT:uv},
the problems \reff{minF(x*y):y:Y} and \reff{minF(y):ppsi:x*}
must also be feasible. Hence, the relaxation
\reff{max:F(x*y):las} is always feasible.
Similarly, an algorithm for solving \reff{minF(y):ppsi:x*} is as follows.

\begin{alg}
\label{alg:maxF(y):Y}
(An algorithm for solving the problem \reff{minF(y):ppsi:x*}.)
\bit

\item [\textbf{Input:}] The point $x^*$ and polynomials
$F(x^*,y),\phi_{x^*},\psi_{x^*} $ as in \reff{phi(y):x*}-\reff{psi(y):x*}.

\item [Step~0:] Let $k:=d_2$.

\item [Step~1:] Solve the semidefinite relaxation
\reff{max:F(x*y):las} for a maximizer $z^*$.
Let $t := d_2$.

\item [Step~2:] Check whether or not $z^*$ satisfies the rank condition
\be  \label{flat:F(x*y):Y}
\rank \, M_{t} (z^*)  \, = \, \rank \, M_{t-d_2} (z^*).
\ee

\item [Step~3:] If \reff{flat:F(x*y):Y} holds, extract
$r :=\rank \, M_{t} (z^*)$ maximizers for \reff{minF(y):ppsi:x*} and stop.

\item [Step~4:] If $t< k$, let $t:=t+1$ and go to Step~3;
otherwise, let $k:=k+1$ and go to Step~1.

\item [\textbf{Output:}]
Maximizers of the optimization problem \reff{minF(y):ppsi:x*}.

\eit
\end{alg}

The same kind of conclusions like in Proposition~\ref{prop:Alg4.1}
hold for Algorithm~\ref{alg:maxF(y):Y}.
The method in \cite{HenLas05} can be applied to extract maximizers in Step~3.
We can show that it must also terminate within finitely many
iterations, under some genericity conditions.

\begin{cond} \label{cond:F(x*):y}
The polynomial tuple $h$ is nonsingular and the point $x^*$
satisfies one (not necessarily all) of the following:
\bit

\item [(1)] $\mbox{IQ}(h_{eq},h_{in})$ is archimedean;

\item [(2)] the equation $\phi_{x^*}(y)=0$ has
finitely many real solutions;

\item [(3)] $\mbox{IQ}(\phi_{x^*}, \psi_{x^*})$ is archimedean.

\eit

\end{cond}

By the same argument as for Condition~\ref{cond:fix:y*},
we can also see that Condition~\ref{cond:F(x*):y} holds generically.
Similarly, Algorithm~\ref{alg:maxF(y):Y} also
terminates within finitely many iterations under some genericity conditions.

\begin{theorem}
\label{thm:max:F(x*y)}
Assume that \reff{minF(x*y):y:Y} has a maximizer
and Condition~\ref{cond:F(x*):y} holds.
If each maximizer of \reff{minF(x*y):y:Y} is an isolated critical point,
then, for all $k$ big enough, \reff{max:F(x*y):las}
has a maximizer and each of them must satisfy \reff{flat:F(x*y):Y}.
\end{theorem}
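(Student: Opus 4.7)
The plan is to show that Theorem~\ref{thm:max:F(x*y)} is the exact mirror of Theorem~\ref{thm:min:F(xy*)} under the sign-reversal that turns maximization into minimization, and that the argument for the latter applies verbatim. Set $\widetilde{F}(y) := -F(x^*,y)$. Then $y^*$ is a maximizer of \reff{minF(x*y):y:Y} if and only if $y^*$ is a minimizer of $\widetilde{F}(y)$ over $Y$. Introduce multipliers $\widetilde{\mu}_j := -\mu_j(x^*,\cdot)$. The KKT system for this minimization reads $\nabla_y \widetilde{F}(y) = \sum_{j \in \mc{E}^Y} \widetilde{\mu}_j \nabla_y h_j(y)$ together with the standard complementarity $0 \leq \widetilde{\mu}_j \perp h_j(y) \geq 0$, which has exactly the same form as \reff{F(x,y*):min}, with $(y,h,\widetilde{F},\widetilde{\mu})$ playing the role of $(x,g,F(\cdot,y^*),\lmd(\cdot,y^*))$.

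Next I would verify that the equality and inequality tuples $\phi_{x^*}$ and $\psi_{x^*}$ defined in \reff{phi(y):x*}--\reff{psi(y):x*} coincide, as polynomial sets, with the analogues of $\phi_{y^*}$ and $\psi_{y^*}$ built from the minimization of $\widetilde{F}$: the sign of $\mu_j$ cancels in the equality $\mu_j(x^*,y)\,h_j = 0$ and in the gradient equation (which is set equal to zero), while the inequality $-\mu_j(x^*,y) \geq 0$ is literally $\widetilde{\mu}_j \geq 0$. Consequently $\mbox{IQ}(\phi_{x^*},\psi_{x^*})$ is unchanged, Condition~\ref{cond:F(x*):y} matches the analogue of Condition~\ref{cond:fix:y*} for the reformulated minimization, and the isolated-critical-point hypothesis transfers. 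Moreover the Lasserre hierarchy \reff{max:F(x*y):las} relates to the minimization hierarchy for $\widetilde{F}$ simply by $\langle F(x^*,y),z\rangle = -\langle \widetilde{F},z\rangle$; the moment and localizing matrix constraints are identical, and the rank condition \reff{flat:F(x*y):Y} is precisely \reff{flatcd:F(x):X} in the reformulated problem.

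Applying the mirror of Theorem~\ref{thm:min:F(xy*)} (proved in Section~\ref{sc:proof}) to the minimization of $\widetilde{F}$ over $Y$ then yields the desired conclusion: for all sufficiently large $k$ the relaxation \reff{max:F(x*y):las} admits a maximizer, and every such maximizer satisfies \reff{flat:F(x*y):Y}. The only real source of friction is bookkeeping the simultaneous sign changes of $\mu_j$ and of the objective throughout the reduction; once this is carefully tracked, the finite convergence follows immediately from the argument already laid out for the lower-level minimization case.
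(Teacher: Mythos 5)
Your reduction-by-sign-reversal is exactly the route the paper takes: its proof of Theorem~\ref{thm:max:F(x*y)} consists of the single observation that the argument for Theorem~\ref{thm:min:F(xy*)} applies verbatim, and you have merely spelled out the bookkeeping (flipping $F\mapsto -F$, $\mu\mapsto -\mu$, and checking that $\phi_{x^*},\psi_{x^*}$ and the flat-truncation condition are unchanged) that the paper leaves implicit. The proposal is correct and matches the paper's approach.
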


The proof of Theorem~\ref{thm:max:F(x*y)}
will be given in Section~\ref{sc:proof}. Similarly,
when $F,g,h$ are generic, each maximizer of
\reff{minF(x*y):y:Y} is an isolated
critical point of \reff{minF(x*y):y:Y}.

\section{Some proofs}
\label{sc:proof}

This section gives the proofs for some
theorems in the previous sections.

\begin{proof}[Proof of Theorem~\ref{thm:Cktpt:fnt}]
Under the genericity assumption, the polynomial tuples
$g,h$ are nonsingular, so the Lagrange multipliers
in \reff{opcd:F:x}-\reff{opcd:F:y}
can be expressed as in \reff{lmd(x,y)}-\reff{mu(x,y)}.
Hence, \reff{cpxkkt:Fxy} is equivalent to the
polynomial system in $(x,y,\lmd,\mu)$:
\be \label{cpxkkt:F:lmdmu}
\left\{ \baray{c}
\nabla_xF(x, y) = \sum_{ i \in \mc{E}^X }  \lmd_i \nabla_x g_i(x) , \\
\nabla_yF(x, y) = \sum_{j \in \mc{E}^Y } \mu_j   \nabla_y h_j(y), \\
 g_i(x)  = 0 \, (i \in \mc{E}^X_1), \,
 \lmd_i   g_i(x)  = 0 \, (i \in \mc{E}^X_2), \\
 h_j(y)  = 0 \, (j \in \mc{E}^Y_1), \,
 \mu_j    h_j(y)  = 0 \, (j \in \mc{E}^Y_2).
\earay\right.
\ee
Due to the complementarity conditions,
$g_i(x)  = 0$ or $\lmd_i  = 0$ for each $i \in \mc{E}^X_2$,
and $h_j(x)  = 0$ or $\mu_j = 0$ for each $j \in \mc{E}^Y_2$.
Note that if $g_i(x)  \ne 0$ then $\lmd_i = 0$ and if $h_j(x)  \ne 0$ then $\mu_j = 0$.
Since $\mc{E}^X_2, \mc{E}^Y_2$ are finite labeling sets,
there are only finitely many cases of $g_i(x)  = 0$ or $g_i(x)  \ne 0$,
$h_j(x)  = 0$ or $h_j(x)  \ne 0$.
We prove the conclusion is true for every case.
Moreover, if $g_i(x)  = 0$ for $i \in \mc{E}^X_2$,
then the inequality $g_i(x) \ge 0$ can be counted as an equality constraint.
The same is true for $h_j(x)  = 0$ with $j \in \mc{E}^Y_2$.
Therefore, we only need to prove the conclusion is true
for the case that has only equality constraints.
Without loss of generality, assume
$\mc{E}^X_2 =\mc{E}^Y_2 = \emptyset$ and write the labeling sets as
\[
\mc{E}^X_1 = \{1, \ldots, \ell_1\},
\qquad
\mc{E}^Y_1 = \{1, \ldots, \ell_2\}.
\]
When all $g_i$ are generic polynomials,
the equations $g_i(x) = 0$ ($i \in \mc{E}^X_1$) have no solutions if $\ell_1 > n$.
Similarly, the equations $h_j(x) = 0$ ($j \in \mc{E}^Y_1$)
have no solutions if $\ell_2 > m$ and all $h_j$ are generic.
Therefore, we only consider the case that $\ell_1 \leq n$ and $\ell_2 \leq m$.
When $F,g,h$ are generic,  we show that \reff{cpxkkt:F:lmdmu}
cannot have infinitely many solutions.
The system \reff{cpxkkt:F:lmdmu} is the same as
\be \label{cxkt:lamu:gh=0}
\left\{ \baray{c}
\nabla_xF(x, y) = \sum_{i=1}^{\ell_1}  \lmd_i \nabla_x g_i(x) , \,
g_1(x)  = \cdots = g_{\ell_1}(x) =  0, \\
\nabla_yF(x, y) = \sum_{j=1}^{\ell_2} \mu_j   \nabla_y h_j(y), \,
h_1(y)  = \cdots = h_{\ell_2}(y) = 0.
\earay\right.
\ee
Let $\tx = (x_0, x_1, \ldots, x_n)$
and $\ty = (y_0, y_1, \ldots, y_m)$.
Denote by $\tg_i(\tx)$ (resp., $\th_j(\ty)$)
the homogenization of $g_i(x)$ (resp., $h_j(y)$).
Let $\P^n$ denote the $n$-dimensional complex projective space.
Consider the projective variety
\[
\mc{U} \, := \,  \big \{ (\tx,\ty) \in \P^n \times \P^m:
\tg_i(\tx)  =  0 \, (i \in \mc{E}^X), \,
\th_j(\ty)  =  0 \, (j \in \mc{E}^Y) \big \}.
\]
It is smooth, by Bertini's theorem \cite{Har},
under the genericity assumption on $g_i, h_j$.
Denote the bi-homogenization of $F(x,y)$
\[
\tilde{F}(\tx,\ty) \, := \,
x_0^{a_0} y_0^{b_0}\tilde{F}(x/x_0, y/y_0).
\]
When $F(x,y)$ is generic, the projective variety
\[
\mc{V} := \mc{U} \cap \{ \tilde{F}(\tx,\ty) = 0 \}
\]
is also smooth. One can directly verify that
(for homogeneous polynomials)
\[
x^T\nabla_x \tilde{F}(\tx,\ty) + x_0 \pt_{x_0} \tilde{F}(\tx,\ty)
= a_0 \tilde{F}(\tx, \ty),
\]
\[
x^T\nabla_x \tg_i(\tx) + x_0 \pt_{x_0} \tg_i(\tx)
= a_i \tg_i(\tx),
\]
\[
y^T\nabla_y \tilde{F}(\tx,\ty) + y_0 \pt_{y_0} \tilde{F}(\tx,\ty)
= b_0 \tilde{F}(\tx, \ty),
\]
\[
y^T\nabla_y \th_j(\ty) + y_0 \pt_{y_0} \th_j(\ty)
= b_i \th_j(\ty).
\]
(They are called Euler's identities.)
Consider the determinantal variety
\[
W  := \left \{ (x,y) \in \cpx^n \times \cpx^m
\left| \baray{r}
\rank \bbm
\nabla_xF(x, y) &   \nabla_x g_1(x) & \cdots &  \nabla_x g_{\ell_1}(x)
\ebm \leq \ell_1   \\
\rank \bbm
\nabla_yF(x, y) &   \nabla_y h_1(y) & \cdots &  \nabla_y h_{\ell_2}(y)
\ebm \leq \ell_2
\earay \right. \right\}.
\]
Its homogenization is
\[
\widetilde{W}  := \left \{ (\tx,\ty) \in \P^n \times \P^m
\left| \baray{r}
\rank \bbm
\nabla_x \tilde{F}(\tx, \ty) &   \nabla_x \tg_1(\tx) & \cdots &  \nabla_x \tg_{\ell_1}(\tx)
\ebm \leq \ell_1    \\
\rank \bbm
\nabla_y \tilde{F}(\tx, \ty) &   \nabla_y \th_1(\ty) & \cdots &  \nabla_y \th_{\ell_2}(\ty)
\ebm \leq \ell_2
\earay \right. \right\}.
\]
The projectivization of \reff{cxkt:lamu:gh=0} is the intersection
\[
\widetilde{W} \cap \mc{U}.
\]
If \reff{cpxkkt:Fxy}
has infinitely many complex solutions,
so does \reff{cxkt:lamu:gh=0}. Then, $\widetilde{W} \cap \mc{U}$
must intersect the hypersurface
$\{ \tilde{F}(\tx,\ty) = 0 \}$.
This means that there exists $(\bx, \by) \in \mc{V}$ such that
\[
\nabla_x \tilde{F}(\bx,\by) = \sum_{i=1}^{\ell_1}  \lmd_i \nabla_x \tg_i(\bx), \quad
\nabla_y \tilde{F}(\bx,\by) = \sum_{j=1}^{\ell_2} \mu_j   \nabla_y \th_j(\by),
\]
for some $\lmd_i, \mu_j$. Also note
$\tg_i(\bx) = \th_j(\by) = \tilde{F}(\bx, \by) =0$. Write
\[
\bx = (\bx_0, \bx_1, \ldots, \bx_n), \quad
\by = (\by_0, \by_1, \ldots, \by_m).
\]
\bit

\item  If $\bx_0 \ne 0$ and $\by_0 \ne 0$,
by Euler's identities, we can further get
\[
\pt_{x_0} \tilde{F}(\bx,\by) = \sum_{i=1}^{\ell_1}  \lmd_i \pt_{x_0} \tg_i(\bx), \quad
\pt_{y_0} \tilde{F}(\bx,\by) = \sum_{j=1}^{\ell_2} \mu_j   \pt_{y_0} \th_j(\by).
\]
This implies that $\mc{V}$ is singular,
which is a contradiction.

\item If $x_0 = 0$ but $y_0 \ne 0$,
by Euler's identities, we can also get
\[
\pt_{y_0} \tilde{F}(\bx,\by) = \sum_{j=1}^{\ell_2} \mu_j   \pt_{y_0} \th_j(\by).
\]
This means the linear section
$\mc{V} \cap \{x_0 = 0\}$ is singular,
which is a contradiction again, by the genericity assumption
on $F,g,h$.

\item If $x_0 \ne 0$ but $y_0 = 0$, then we can have
\[
\pt_{x_0} \tilde{F}(\bx,\by) = \sum_{i=1}^{\ell_1}  \lmd_i \pt_{x_0} \tg_i(\bx).
\]
So the linear section $\mc{V} \cap \{y_0 = 0\}$ is singular,
which is again a contradiction.

\item If $x_0 = y_0 = 0$, then
$\mc{V} \cap \{x_0 = 0, y_0 = 0\}$
is singular. It is also
a contradiction, under the genericity assumption on $F,g,h$.

\eit
For every case, we obtain a contradiction.
Therefore, the polynomial system~\reff{cpxkkt:Fxy}
must have only finitely many complex solutions,
when $F,g,h$ are generic.
\end{proof}

\begin{proof}[Proof of Theorem~\ref{thm:iter:bound}]
Each solution of \reff{cpxkkt:Fxy}
is a critical point of $F(x,y)$
over the set $X \times Y$.
We count the number of critical points
by enumerating all possibilities of active constraints.
For an active labeling set
$\{i_1, \ldots, i_{r_1} \} \subseteq [\ell_1]$ (for $X$)
and an active labeling set
$\{j_1, \ldots, j_{r_2} \} \subseteq [\ell_2]$ (for $Y$),
an upper bound for the number is critical points is
$
a_{i_1} \cdots a_{ i_{r_1} } b_{j_1} \cdots b_{ j_{r_2} } \cdot s,
$
which is given by Theorem~2.2 of \cite{NR09}.
Summing this upper bound for all possible active constraints,
we eventually get the bound $M$.
Since $\mc{K}_0$ is a subset of \reff{cpxkkt:Fxy},
Algorithm~\ref{alg:pop:kt} must terminate
within $M$ iterations, for generic polynomials.
\end{proof}

\begin{proof}[Proof of Theorem~\ref{thm:Lascvg:xy}]
In Condition~\ref{cond:AC:FXYuv}, the item (1) or (2) implies (3).
Note that the dual optimization problem of \reff{<F,w>:lasrlx:ordk} is
\be  \label{maxgm:sos:ordk}
\left\{ \baray{cl}
\max & \gamma \\
\mbox{subject to} &  F - \gamma \in \mbox{IQ}(\phi,\psi)_{2k}.
\earay \right.
\ee

i) When \reff{optF:KKT:xy} is infeasible, the set
$\{\phi(x,y)=0, \, \psi(x,y) \geq 0 \}$ is empty. Since
$\mbox{IQ}(\phi,\psi)$ is archimedean,
by the classical Positivstellensatz~\cite{BCR}
and Putinar's Positivstellensatz~\cite{Put}, we have
$
-1 \in  \mbox{IQ}(\phi,\psi).
$
So,
$
-1 \in  \mbox{IQ}(\phi,\psi)_{2k}
$
for all such $k$ big enough. Hence, \reff{maxgm:sos:ordk}
is unbounded from above for all big $k$.
By weak duality, we know \reff{<F,w>:lasrlx:ordk}
must be infeasible.

ii) When \reff{optF:KKT:xy} is feasible,
every feasible point is a critical point. By Lemma~3.3 of \cite{DNP},
$F(x,y)$ achieves finitely many values on $\phi(x,y)=0$, say,
\[
c_1 < c_2 < \cdots < c_N.
\]
Recall that $f_{\ast}$ is the minimum value of \reff{minF(xy):phi:psi}.
So, $f_{\ast}$ is one of the $c_i$, say, $c_\ell = f_{\ast}$.
Since \reff{optF:KKT:xy} has only finitely many minimizers,
we can list them as the set
\[
O := \{ (u_1, v_1), \ldots, (u_B, v_B) \}.
\]
If $(x,y)$ is a feasible point of \reff{optF:KKT:xy},
then either $F(x,y) = c_k$ with $k>\ell$, or
$(x,y)$ is one of $(u_1, v_1), \ldots, (u_B, v_B)$.
Define the polynomial
\[
P(x, y) := \Big( \prod_{i = \ell+1}^N ( F(x,y)- c_i)^2 \Big) \cdot
\Bigg( \prod_{(u_j,v_j) \in O }  \Big( \|x-u_j\|^2 + \| y-v_j \|^2 \Big) \Bigg).
\]
We partition the set $\{ \phi(x,y) = 0 \}$
into four disjoint ones:
\[
\baray{l}
U_1 := \left\{  \phi(x,y) =0,
c_1 \leq F(x,y) \leq  c_{\ell-1}  \right\}, \\
U_2 := \left\{  \phi(x,y) =0,  F(x,y) = c_{\ell}, (x,y) \not\in O \right\}, \\
U_3 := \left\{  \phi(x,y) =0,  F(x,y) = c_{\ell}, (x,y) \in O  \right\}, \\
U_4 := \left\{ \phi(x,y) =0,
c_{\ell+1} \leq F(x,y) \leq  c_N  \right\}.
\earay
\]
Note that $U_3$ is the set of minimizers for \reff{minF(xy):phi:psi}.
\bit

\item For all $(x,y) \in U_1$ and $i=\ell+1, \ldots, N$,
\[
(F(x,y)-c_i)^2 \geq (c_{\ell-1} - c_{\ell+1} )^2.
\]
The set $U_1$ is closed and each $(u_j, v_j) \not \in U_1$.
The distance from $(u_j, v_j)$ to $U_1$ is positive.
Hence, there exists $\eps_1 >0$ such that
$P(x,y) > \eps_1$ for all $(x,y) \in U_1$.

\item  For all $(x,y) \in U_2$,
$
(F(x,y)-c_i)^2 = (c_{\ell} - c_i)^2.
$
For each $(u_j, v_j) \in O$,
its distance to $U_2$ is positive.
This is because each $(u_i, v_i) \in O$
is an isolated real critical point.
So, there exists $\eps_2 >0$ such that
$P(x,y) > \eps_2$ for all $(x,y) \in U_2$.

\eit
Denote the new polynomial
\[
q(x,y) := \min(\eps_1, \eps_2) - P(x,y).
\]
On the set $\{ \phi(x,y)=0 \}$, the inequality
$q(x,y) \geq 0$ implies $(x,y) \in U_3 \cup U_4$.
Therefore, \reff{optF:KKT:xy} is equivalent to the optimization problem
\be \label{opt:q>=0}
\left\{ \baray{cl}
\min\limits_{x,y} & F(x,y) \\
\mbox{subject to} &  \phi(x,y) = 0, \, q(x,y) \geq 0.
\earay \right.
\ee
Note that $q(x,y) > 0$ on the feasible set of \reff{optF:KKT:xy}.
\big(This is because if $(x,y)$ is a feasible point of \reff{optF:KKT:xy},
then $F(x,y) \geq f_{\ast} = c_{\ell}$, so $(x,y) \not\in U_1$.
If $F(x,y)= c_{\ell}$, then $(x,y) \in O$ and $P(x,y)=0$,
so $q(x,y) = \min(\eps_1, \eps_2) > 0$.
If $F(x,y) > c_{\ell}$, then $P(x,y) =0$ and we also have
$q(x,y) = \min(\eps_1, \eps_2) > 0$.\big)
By Condition~\ref{cond:AC:FXYuv} and Putinar's Positivstellensatz,
it holds that
$
q \, \in \, \mbox{IQ}(\phi,\psi).
$
Now, we consider the hierarchy of
Lasserre's relaxations for solving \reff{opt:q>=0}:
\be \label{mom:k:rho}
\left\{\baray{rl}
f_k^\prm  := \min & \langle F, w \rangle \\
\mbox{subject to} & (w)_0 = 1,  M_k(w) \succeq 0, \\
      & L_{\phi}^{(k)}(w) = 0,   \,  L_{q}^{(k)}(w) \succeq 0.
\earay \right.
\ee
Its dual optimization problem is
\be  \label{sos:k:rho}
\left\{\baray{rl}
f_k := \, \max & \gamma  \\
\mbox{subject to} &  F-\gamma \in \mbox{IQ}(\phi, q)_{2k}.
\earay \right.
\ee

\noindent
{\bf Claim:}
For all $k$ big enough, it holds that
$
f_k =  f_k^\prm = f_{\ast}.
$
\begin{proof}
The possible objective values of \reff{opt:q>=0}
are $c_\ell, \ldots, c_N$.
Let $p_1, \ldots,p_N$
be real univariate polynomials such that
$p_i( c_j) = 0$ for $i \ne j$ and $p_i( c_j) = 1$ for $i = j$.
Let
\[
s_i :=  ( c_i -  f_{\ast}) \big( p_i (  F)  \big)^2, \quad
i=\ell, \ldots, N.
\]
Then $s := s_\ell + \cdots + s_N \in \Sig[x]_{2k_1}$
for some order $k_1 > 0$. Let
\[
\hat{F}:= F - f_{\ast} - s.
\]
Note that $\hat{F}(x) \equiv 0$ on the set
\[
\mc{K}_2 \, := \, \{
\phi(x,y) = 0, \, q(x,y) \geq 0 \}.
\]
It has a single inequality.
By the Positivstellensatz~\cite[Corollary~4.1.8]{BCR},
there exist $0< t \in \N$ and
$Q = b_0 + q b_1 $ ($b_0, b_1 \in \Sig[x]$) such that
$
\hat{F}^{2 t} + Q \in \mbox{Ideal}(\phi).
$
Note that $Q \in \mbox{Qmod}(q)$.
For all $\eps >0$ and $\tau >0$, we have
$
\hat{F} + \eps = \phi_\eps + \theta_\eps
$
where
\[
\phi_\eps = -\tau \eps^{1-2t}
\big(\hat{F}^{2t} + Q \big),
\]
\[
\theta_\eps = \eps \Big(1 + \hat{F}/\eps +
\tau ( \hat{F}/\eps)^{2t} \Big)
+ \tau \eps^{1-2t} Q.
\]
By Lemma~2.1 of \cite{POPreal}, when $\tau \geq \frac{1}{2t}$,
there exists $k_2$ such that, for all $\eps >0$,
\[
\phi_\eps \in \mbox{Ideal}(\phi)_{2k_2}, \quad
\theta_\eps \in \mbox{Qmod}(q)_{2k_2}.
\]
Hence, we can get
\[
F - (f_{\ast} -\eps) = \phi_\eps + \sig_\eps,
\]
where $\sig_\eps = \theta_\eps + s \in \mbox{Qmod}(q)_{2k_2}$
for all $\eps >0$. For all $\eps>0$,
$\gamma = f_{\ast}-\eps$ is feasible in \reff{sos:k:rho} for the order $k_2$,
so $f_{k_2} \geq f_{\ast}$.
Because $f_k \leq f_{k+1} \leq \cdots \leq f_{\ast}$,
we have $f_{k} = f_{k}^\prm = f_{\ast}$ for all $k \geq k_2$.
\end{proof}

Because $q \in  \mbox{Qmod}(\psi)$,
each $w$, which is feasible for \reff{<F,w>:lasrlx:ordk},
is also feasible for \reff{mom:k:rho}.
This can be implied by \cite[Lemma~2.5]{Nie-ft}.
So, when $k$ is big, each $w$ is also a minimizer of \reff{mom:k:rho}.
The problem \reff{opt:q>=0} also has only finitely many minimizers.
By Theorem~2.6 of \cite{Nie-ft},
the condition \reff{cond:flat} must be satisfied
for some $t\in [d_0,k]$, when $k$ is big enough.
\end{proof}

\begin{proof}[Proof of Theorem~\ref{thm:min:F(xy*)}]
The proof is the same as the one
for Theorem~\ref{thm:Lascvg:xy}.
This is because the Lasserre's relaxations
\reff{min<F(xy*),z>} are constructed by using
optimality conditions of \reff{minF(xy*):X},
which is the same as for Theorem~\ref{thm:Lascvg:xy}.
In other words, Theorem~\ref{thm:min:F(xy*)}
can be thought of a special version of Theorem~\ref{thm:Lascvg:xy}
with $K_1 = K_2 = \emptyset$, without variable $y$.
The assumptions are the same.
Therefore, the same proof can be used.
\end{proof}

\begin{proof}[Proof of Theorem~\ref{thm:max:F(x*y)}]
The proof is the same as the one
for Theorem~\ref{thm:min:F(xy*)}.
\end{proof}

\section{Numerical Experiments}
\label{sc:num}

This section presents numerical examples of applying
Algorithm~\ref{alg:pop:kt} to solve saddle point problems.
The computation is implemented in MATLAB R2012a,
on a Lenovo Laptop with CPU@2.90GHz and RAM 16.0G.
The Lasserre type moment semidefinite relaxations
are solved by the software {\tt GloptiPoly~3} \cite{GloPol3},
which calls the semidefinite program solver {\tt SeDuMi} \cite{Sturm}.
For cleanness, only four decimal digits are displayed
for computational results.

In prior existing references,
there are very few examples of non convex-concave type SPPPs.
We construct various examples, with different types of functions and constraints.
When $g,h$ are nonsingular tuples, the Lagrange multipliers
$\lmd(x,y),\mu(x,y)$ can be expressed by polynomials as in
\reff{lmd(x,y)}-\reff{mu(x,y)}.
Here we give some expressions for $\lmd(x,y)$
that will be frequently used in the examples.
The expressions are similar for $\mu(x,y)$.
Let $F(x,y)$ be the objective.

\begin{itemize}

\item For the simplex
$\Delta_n =\{x \in \re^n: e^Tx=1, x \geq 0 \}$,
$g=(e^Tx-1, x_1, \ldots, x_n)$ and we have
\be \label{lmd(x,y):simplex}
\lmd(x,y) \,= \, (x^T \nabla_x F, F_{x_1}-x^T\nabla_x F,
\ldots, F_{x_n}-x^T\nabla_x F).
\ee

\item For the hypercube set $[-1,1]^n$,
$g=(1-x_1^2, \ldots, 1-x_n^2)$ and
\be  \label{lmd(x,y):hypercube}
\lambda(x,y) \,= \, -\frac{1}{2} ( x_1 F_{x_1}, \ldots, x_n F_{x_n}).
\ee

\item For the box constraint $[0,1]^n$,
$g=(x_1, \ldots, x_n, 1-x_1, \ldots, 1-x_n)$ and
\be \label{lmd(x,y):box}
\lambda(x,y) \,= \, ((1-x_1) F_{x_1}, \ldots, (1-x_n) F_{x_n},
-x_1 F_{x_1}, \ldots, -x_n F_{x_n} ).
\ee

\item For the ball $B_n(0,1) = \{ x \in \re^n: \, \|x\| \leq 1\}$
or sphere $\mathbb{S}^{n-1} = \{  x \in \re^n: \,  \|x\| = 1\}$,
$g=1-x^Tx$ and we have
\be \label{lmd(x,y):ball}
\lambda(x,y) \,= \, -\frac{1}{2}x^T \nabla_x F.
\ee

\item For the nonnegative orthant $\re_+^n$,
$g=(x_1,\ldots,x_n)$ and we have
\be  \label{lmd(x,y):nngorthant}
\lambda(x,y) \, = \, (F_{x_1}, \ldots,  F_{x_n}).
\ee

\end{itemize}

We refer to \cite{LagExp} for more details about Lagrange multiplier expressions.

\subsection{Some explicit examples}

\begin{exm} \label{exm6.1:simplex}
Consider the simplex feasible sets $X = \Dt_n$, $Y = \Dt_m$.
The Lagrange multipliers can be expressed as in \reff{lmd(x,y):simplex}. \\
(i) Let $n=m=3$ and
\[
F(x,y)  \, = \,
x_1x_2+x_2x_3+x_3y_1+x_1y_3+y_1y_2+y_2y_3.
\]
This function is neither convex in $x$ nor concave in $y$.
After $1$ iteration by Algorithm~\ref{alg:pop:kt}, we got the saddle point:
\[
x^* =(0.0000, \, 1.0000, \, 0.0000), \quad
y^*=(0.2500,\, 0.5000,\, 0.2500).
\]
It took about $2$ seconds. \\
(ii) Let $n=m=3$ and $F(x,y)$ be the function
\begin{multline*}
x_1^3+x_2^3-x_3^3-y_1^3-y_2^3+y_3^3+x_3y_1y_2(y_1+y_2)+x_2y_1y_3(y_1+y_3)+x_1y_2y_3(y_2+y_3).
\end{multline*}
This function is neither convex in $x$ nor concave in $y$.
After $2$ iterations  by Algorithm~\ref{alg:pop:kt}, we got the saddle point
\[
x^*=(0.0000,0.0000,1.0000), \quad y^*=(0.0000,0.0000,1.0000).
\]
It took about $7.5$ seconds. \\
(iii) Let $n=m=4$ and
\[
F(x,y) \,=\, {\sum}_{i,j=1}^4  x_i^2y_j^2-{\sum}_{i\neq j} (x_ix_j+y_iy_j).
\]
This function is neither convex in $x$ nor concave in $y$.
After $2$ iterations by Algorithm~\ref{alg:pop:kt}, we got $4$ saddle points:
\[
x^*=(0.2500,0.2500,0.2500,0.2500), \quad y^*=e_i,
\]
with $i=1,2,3,4$.
It took about $99$ seconds. \\
(iv) Let $n=m=3$ and
\[
F(x,y)  \, := \,
x_1x_2y_1y_2  + x_2x_3y_2y_3 + x_3x_1y_3y_1
-x_1^2y_3^2 -x_2^2y_1^2 -x_3^2y_2^2.
\]
This function is neither convex in $x$ nor concave in $y$.
After $4$ iterations  by Algorithm~\ref{alg:pop:kt},
we got that there is no saddle point.
It took about $32$ seconds.
\end{exm}

%%%%%%%%%%%%%%%%%%%%%%%%%%%%%%%%%%%%%%%%%%%%%%%%%%%%%%%%%%%%%%%%%%%%%
\begin{exm}  \label{exm6.2:box}
Consider the box constraints $X =[0,1]^n$ and $Y =[0, 1]^m$.
The Lagrange multipliers can be expressed as in \reff{lmd(x,y):box}. \\
(i) Consider $n=m=2$ and
\[
F(x,y)  \, := \,
 (x_1+ x_2 + y_1 + y_2 + 1)^2
-4(x_1x_2 +x_2y_1 + y_1y_2 + y_2 + x_1).
\]
This function is convex in $x$ but not concave in $y$.
After $2$ iterations by Algorithm~\ref{alg:pop:kt}, we got the saddle point
\[
x^* = ( 0.3249,    0.3249 ), \quad
y^* = ( 1.0000,    0.0000 ).
\]
It took about $3.7$ seconds. \\
(ii) Let $n=m=3$ and
\[
F(x,y) \, := \, {\sum}_{i=1}^n (x_i+y_i) + {\sum}_{i<j} (x_i^2y_j^2-y_i^2x_j^2) .
\]
This function is neither convex in $x$ nor concave in $y$.
After $3$ iterations by Algorithm~\ref{alg:pop:kt}, we got that
there is no saddle point.
It took about $12.8$ seconds.
\end{exm}

%%%%%%%%%%%%%%%%%%%%%%%%%%%%%%%%%%%%%%%%%%%%%%%%%%%%%%%%%%%%%%%%%%%%%%%%%%%%%
\begin{exm}
Consider the cube constraints $X=Y=[-1,1]^3$.
The Lagrange multipliers can be expressed as in \reff{lmd(x,y):hypercube}. \\
(i) Consider the function
\[
F(x,y)  \, := \,
{\sum}_{i=1}^3  (x_i+y_i)  - {\prod}_{i=1}^3 (x_i - y_i).
\]
This function is neither convex in $x$ nor concave in $y$.
After $1$ iteration by Algorithm~\ref{alg:pop:kt}, we got $3$ saddle points:
\[
x^* = (-1.0000,   -1.0000,    1.0000), \quad
y^* = (1.0000,    1.0000,    1.0000),
\]
\[
x^* = ( -1.0000,    1.0000,   -1.0000 ), \quad
y^* = (1.0000,    1.0000,    1.0000),
\]
\[
x^* = (1.0000,   -1.0000,   -1.0000), \quad
y^* = (1.0000,    1.0000,    1.0000).
\]
It took about $75$ seconds. \\
(ii) Consider the function
\[
F(x,y)  \, := \, y^Ty-x^Tx+{\sum}_{1 \le i<j \le 3} (x_iy_j-x_jy_i).
%y^Ty-x^Tx+x^TBy
\]
This function is neither convex in $x$ nor concave in $y$.
After $4$ iterations by Algorithm~\ref{alg:pop:kt}, we got the saddle point
\[
x^* =  (-1.0000,    1.0000,   -1.0000), \quad
y^* =  (-1.0000,    1.0000,   -1.0000).
\]
It took about $6$ seconds.
\end{exm}

%%%%%%%%%%%%%%%%%%%%%%%%%%%%%%%%%%%%%%%%%%%%%%%%%%%%%%%%%%%%%%%%%%%%%%%%
\begin{exm}
Consider the sphere constraints
$X = \mathbb{S}^{2}$ and $Y = \mathbb{S}^{2}$. They are not convex.
The Lagrange multipliers can be expressed as in \reff{lmd(x,y):ball}. \\
(i) Let $F(x,y)$ be the function
\[
x_1^3 + x_2^3 + x_3^3 + y_1^3 + y_2^3 + y_3^3
+2(x_1x_2y_1y_2 + x_1x_3y_1y_3 + x_2x_3y_2y_3).
\]
After $2$ iterations by Algorithm~\ref{alg:pop:kt}, we got $9$ saddle points
$(-e_i, e_j)$, with $i, j =1, 2, 3$.
It took about $64$ seconds. \\
(ii) Let $F(x,y)$ be the function
\begin{multline*}
x_1^2y_1^2+x_2^2y_2^2+x_3^2y_3^2 +
x_1^2 y_2y_3 + x_2^2 y_1y_3 + x_3^2 y_1y_2 +
y_1^2x_2x_3 + y_2^2x_1x_3 + y_3^2x_1x_2.
\end{multline*}
After $4$ iterations by Algorithm~\ref{alg:pop:kt},
we got that there is no saddle point.
It took about $127$ seconds.
\end{exm}

%%%%%%%%%%%%%%%%%%%%%%%%%%%%%%%%%%%%%%%%%%%
\begin{exm}
Let $X=Y=B_3(0,1)$ be the ball constraints and
\[
F(x,y) \,:= \, x_1^2y_1+2x_2^2y_2+3x_3^2y_3-x_1-x_2-x_3 .
\]
The Lagrange multipliers can be expressed as in \reff{lmd(x,y):ball}.
The function $F$ is not convex in $x$ but is concave in $y$.
After $1$ iteration by Algorithm~\ref{alg:pop:kt}, we got the saddle point:
\[
x^*=(0.7264,0.4576,0.3492),\quad
y^*=( 0.6883,0.5463, 0.4772).
\]
It took about $3.3$ seconds.
\end{exm}

%%%%%%%%%%%%%%%%%%%%%%%%%%%%%%%%%%%%%%%%%%%%%%%%%%%
\begin{exm}
Consider the function
\[
F(x,y) \,:=\,  x_1^2y_2y_3+y_1^2x_2x_3+x_2^2y_1y_3+y_2^2x_1x_3
+x_3^2y_1y_2+y_3^2x_1x_2
\]
and the sets
\[
X \,:=\, \{x\in \re^3: \, x^Tx-1 =0, x\ge 0\}, \quad
Y\, := \, \{y \in \re^3: \, y^Ty-1=0, y\ge 0\}.
\]
They are nonnegative portions of spheres.
The feasible sets $X,Y$ are non-convex.
The Lagrange multipliers are expressed as
\[
	\lambda(x,y) = (\frac{1}{2}x^T\nabla_x F, F_{x_1}-x_1x^T\nabla_xF,F_{x_2}-x_2x^T\nabla_xF, F_{x_3}-x_3x^T\nabla_xF ),
\]
\[
	\mu(x,y) = (\frac{1}{2}y^T\nabla_y F, F_{y_1}-y_1y^T\nabla_yF,F_{y_2}-y_2y^T\nabla_yF, F_{y_3}-y_3y^T\nabla_yF ) .
\]
After $3$ iterations by Algorithm~\ref{alg:pop:kt}, we got that
there is no saddle point.
It took about $37.3$ seconds.
\end{exm}

%%%%%%%%%%%%%%%%%%%%%%%%%%%%%%%%%%%%%%%%%%%%%%%%%
\begin{exm}
Let $X=Y= \re_+^4$ be the nonnegative orthant and $F(x,y)$ be
\begin{multline*}
y_1(x_2+x_3+x_4-1)^2+y_2(x_1+x_3+x_4-2)^2+y_3(x_1+x_2+x_4-3)^2 \\
-y_4(x_1+x_2+x_3-4)^2-\Big(x_1(y_2+y_3+y_4-1)^2+x_2(y_1+y_3+y_4-2)^2 \\
   -x_3(y_1+y_2+y_4-3)^2+x_4(y_1+y_2+y_3-4)^2\Big).
\end{multline*}
The Lagrange multipliers can be expressed as in \reff{lmd(x,y):nngorthant}.
The function $F$ is neither convex in $x$ nor concave in $y$.
After $1$ iteration by Algorithm~\ref{alg:pop:kt}, we got the saddle point
\[
 x^*=(1.5075, 0.5337,    0.0000,    0.5018 ), \quad
 y^*=(2.4143,    1.1463,    0.0000,    0.0000).
\]
It took about $4.8$ seconds.
\end{exm}

%%%%%%%%%%%%%%%%%%%%%%%%%%%%%%%%%%%%%%%%%%%%
\begin{exm}
Let $X = Y = \re^3$ be the entire space, i.e., there are no constraints.
There are no needs for Lagrange multiplier expressions.
Consider the function
\[
F(x,y)=\sum_{i=1}^3(x_i^4-y_i^4+x_i+y_i)+\sum_{i\neq j}x_i^3y_j^3 .
\]
It is neither convex in $x$ nor concave in $y$.
After $1$ iteration by Algorithm~\ref{alg:pop:kt}, we got the saddle point
\[
x^*= -(0.6981,   0.6981,   0.6981), \quad
y^*= (0.4979,    0.4979,    0.4979).
\]
It took about $113$ seconds.
\end{exm}

%%%%%%%%%%%%%%%%%%%%%%%%%%%%%%%%%%%%%%%%%%%%%%%%%%%%%%%%%%%%%%%%
\begin{exm}
Consider the sets and the function
\[
X \,:= \, \{x\in \mathbb{R}^3:\, x_1\ge 0,x_1x_2\ge 1,x_2x_3\ge 1\},
\]
\[
Y \, := \, \{y\in \mathbb{R}^3:\, y_1\ge 0,y_1y_2\ge 1,y_2y_3\ge 1\},
\]
\[
F(x,y) \,:=\, x_1^3y_1+x_2^3y_2+x_3^3y_3-3x_1x_2x_3-y_1^2-2y_2^2-3y_3^2 .
\]
The function $F(x,y)$ is not convex in $x$ but is concave in $y$.
The Lagrange multipliers can be expressed as
\[
\lambda_1 = (1-x_1x_2)F_{x_1}, \quad
\lambda_2 = x_1F_{x_1},\quad
\lambda_3 =  -x_1F_{x_1}+x_2F_{x_2}.
\]
The same expressions are for $\mu_j(x,y)$.
After $9$ iterations by Algorithm~\ref{alg:pop:kt}, we get the saddle point:
\[
 x^*=(1.2599,1.2181,1.3032),\quad y^*=(1.0000,1.1067,0.9036).
\]
It took about $64$ seconds.
\end{exm}

\subsection{Some application problems}

\begin{example} \label{ex:zero sum}
We consider the saddle point problem arising from zero sum games with two players.
Suppose $x \in \re^n$ is the strategy for the first player
and $y \in \re^m$ is the strategy for the second one.
The usual constraints for strategies are given by simplices,
which represent probability measures on finite sets.
So we consider feasible sets $X = \Dt_n$, $Y = \Dt_m$.
Suppose the profit function of the first player is
\[
f_1(x,y) \, = \, x^TA_1x + y^TA_2y + x^TBy ,
\]
for matrices $A_1 \in \re^{n\times n}$, $A_2 \in \re^{m\times m}$, $B \in \re^{n\times m}$.
For the zero sum game, the profit function for the second player is
$f_2(x,y) := -f_1(x,y)$. Each player wants to maximize the profit,
for the given strategy of the other player.
The Nash equilibrium is a point $(x^*,y^*)$ such that
the maximum of $f_1(x,y^*) $ over $\Dt_n$ is achieved at $x^*$,
while the maximum of $f_2(x^*,y)$ over $\Dt_m$ is achieved at $y^*$.
This is equivalent to that $(x^*, y^*)$ is a saddle point of the function
$F := -f_1(x,y)$ over $X, Y$.
For instance, we consider the matrices
\[
A_1 =  \begin{pmatrix*}[r]
-4 & 4 & 0 & 3 & -4\\ 3 & 4 & 3 & -4 & -5\\ -3 & 0 & -2 & 0 & 4\\
-4 & -4 & -1 & 3 & -5\\ 4 & 1 & -3 & 0 & -5 \end{pmatrix*}, \quad
A_2 = \begin{pmatrix*}[r]
-4 & 4 & 1 & 0 & 1\\ -2 & -4 & 2 & -3 & 1\\ -3 & 1 & 1 & 4 & 4\\
3 & -4 & 0 & 1 & -2\\ -1 & -3 & -1 & 3 & -2
\end{pmatrix*},
\]

\[
B = \begin{pmatrix*}[r]
-2 & -4 & -2 & -5 & 3\\ 0 & 0 & 2 & 4 & 2\\ 0 & -4 & -1 & -5 & 3\\
1 & -3 & -4 & 0 & -3\\ 3 & -1 & -5 & 4 & -4
\end{pmatrix*} .
\]
The resulting saddle point problem is of the non convex-concave type.
After $2$ iterations by Algorithm~\ref{alg:pop:kt}, we get two Nash equilibria
\[
x^* = (0,1,0,0,0),\quad y^*=(1,0,0,0,0) ,
\]
\[
x^* = (0,1,0,0,0),\quad y^*=(0,1,0,0,0).
\]
It took about $7$ seconds.
\end{example}

\begin{example} \label{ex:robust}
Consider the portfolio optimization problem \cite{hall03,zhu09}
\[
\min_{x\in X} \quad  -\mu^Tx + x^TQx,
\]
where $Q$ is a covariance matrix and $\mu$ is the estimation of some parameters.
There often exists a perturbation $(\dt \mu, \dt Q)$ for $(\mu,Q)$.
This results in two types of robust optimization problems
\[
\baray{ccl}
\min\limits_{x\in X} & \max\limits_{(\delta \mu,\delta Q)\in Y} &
-(\mu+\delta \mu)^Tx + x^T(Q+\delta Q)x,
\\
\max\limits_{(\delta \mu,\delta Q)\in Y} & \min\limits_{x\in X} &
-(\mu+\delta \mu)^Tx + x^T(Q+\delta Q)x .
\earay
\]
We look for $x^*$ and $(\dt \mu^*, \dt Q^*)$
that can solve the above two robust optimization problems simultaneously.
This is equivalent to the saddle point problem with
$F = -(\mu+\delta \mu)^Tx + x^T(Q+\delta Q)x$.
For instance, consider the case that
\[
Q = \begin{pmatrix*}[r]
5 & -4 & -2\\ -4 & 13 & 10\\ -2 & 10 & 8
\end{pmatrix*},\,
\mu = \begin{pmatrix*}[r]
0\\ -1\\ 3
\end{pmatrix*} ,
\]
with the feasible sets
\[
X \,:= \, \{x \in \re^3 \mid  -0.5 \le x_i \le 0.5,\, i=1,\ldots,n\} ,
\]
\[
Y \, := \,  \left \{ (\dt \mu, \dt Q) \in  \re^3 \times
\mathcal{S}\re^{3 \times 3} \Big|
\baray{c}
-0.1 \le (\delta \mu)_k, \, (\delta Q)_{ij} \le 0.1, \\
 1 \le k \le 3, \, 1\le i, j\le 3
\earay
\right \} .
\]
In the above, $\mathcal{S}\re^{3 \times 3}$ denotes the space of
real symmetric $3$-by-$3$ matrices.
The Lagrange multipliers can be similarly expressed as in \reff{lmd(x,y):box}.
After 1 iteration by Algorithm~\ref{alg:pop:kt}, we got the saddle point
\[
x^* =
\begin{pmatrix*}[r]
-0.1289\\
-0.4506 \\
    0.5000
\end{pmatrix*},\,
\delta Q^* =
\begin{pmatrix*}[r]
0.1 & 0.1 & -0.1 \\
0.1 & 0.1 & -0.1 \\
-0.1 & -0.1 & 0.1
\end{pmatrix*},\,
\delta \mu^* =
\begin{pmatrix*}[r]
0.1 \\ 0.1 \\ -0.1
\end{pmatrix*}.
\]
It took about $32$ seconds. The above two min-max and max-min
optimization problems are solved simultaneously by them.
\end{example}

\subsection{Some comparisons with other methods}
\label{ssc:compa}

Upon the request by referees, we give some comparisons between
Algorithm~\ref{alg:pop:kt} and other methods.
The saddle point problems can also be solved by the straightforward approach
of enumerating all KKT points.
When all KKT points are to be computed,
the numerical homotopy method such as {\tt Bertini} \cite{BHSW06} can be used.
Saddle points can also be computed by quantifier elimination (QE) methods.
Note that the definition \reff{dfsad:F:XY} automatically gives
a quantifier formula for saddle points.
In the following, we give a comparison
of the performance of these methods and Algorithm~\ref{alg:pop:kt}.

For computing all KKT points, the {\tt Maple} function \texttt{Solve}
is used. After they are obtained,
we select saddle points from them by checking the definition.
For the QE approach, the {\tt Maple} function \texttt{QuantifierElimination}
is used to solve the quantifier elimination formulae.
For the numerical homotopy approach, the software \texttt{Bertini}
is used to solve the KKT system for getting all KKT points first
and then we select saddle points from them.
When there are infinitely many KKT points,
the function \texttt{Solve} and the software \texttt{Bertini} experience
difficulty to get saddle points by enumerating KKT points.
This happens for Examples 6.1(i), 6.2(i), 6.4(ii).
The computational time (in seconds or hours) for these methods
is reported in Table~\ref{tab:compa:KKTQE}.
We would like to remark that the {\tt Maple} function \texttt{QuantifierElimination}
cannot solve any example question
(it does not terminate within $6$ hours for each one).
So we also try the {\tt Mathematica} function {\tt Resolve}
to implement the QE method. It can solve Example~6.1(iii)
in about $1$ second, but it cannot solve any other example question
(it does not terminate within $6$ hours).
The software {\tt Bertini} can solve
Examples~6.1(ii), 6.2(ii), 6.3(i), 6.4(i),
6.5, 6.7, 6.8, 6.9.  For other example questions,
it cannot finish within $6$ hours.
For these examples, Algorithm~\ref{alg:pop:kt} took much less computational time,
except Example~6.1(iii).
We also like to remark that the symbolic methods like
QE can obtain saddle points exactly in symbolic operations,
while numerical methods can only obtain saddle points correctly up to round off errors.

\begin{table}
\caption{Comparisons with other types of methods}
\label{tab:compa:KKTQE}
\begin{center}
\begin{tabular}{l|l|l|l|l}
Exemp. & Alg.~\ref{alg:pop:kt} & KKT ({\tt Solve}) & QE & {\tt Bertini}   \\   \hline
6.1(i) & $2$ sec. &  $\infty$ KKT points & $>\,6$ hours  &  $\infty$ KKT points  \\  \hline
6.1(ii) &  $7.5$ sec. &   $>\,6$ hours &  $>\,6$ hours  & $380$ sec. \\   \hline
6.1(iii) & $99$ sec. &  $190$ sec. &  $1$ sec.  & $>\,6$ hours \\  \hline
6.1(iv) & $32$ sec. &  $>\,6$ hours & $>\,6$ hours  & $>\,6$ hours \\  \hline
6.2(i) &  $3.7$ sec. &  $\infty$ KKT points &  $>\,6$ hours & $\infty$ KKT points \\  \hline
6.2(ii) & $12.8$ sec. &  $>\,6$ hours & $>\,6$ hours  & $780$ sec. \\  \hline
6.3(i) & $75$ sec. &  $30$ sec. &  $>\,6$ hours & 1100 sec.\\    \hline
6.3(ii) & $6$ sec. &  $13011$ sec. &  $>\,6$ hours & $>\,6$ hours \\    \hline
6.4(i) & $64$ sec. &  $13921$ sec. &  $>\,6$ hours & $441$ sec. \\  \hline
6.4(ii) & $127$ sec. & $\infty$ KKT points &  $>\,6$ hours &  $\infty$ KKT points \\  \hline
6.5 &  $3.3$ sec. & $>\,6$ hours &  $>\,6$ hours & $8701$ sec. \\  \hline
6.6 &  $37.3$ sec. & $>\,6$ hours &  $>\,6$ hours & $>\,6$ hours \\  \hline
6.7 & $4.8$ sec. & $>\,6$ hours &  $>\,6$ hours & $78$ sec. \\   \hline
6.8 & $113$ sec.  & $>\,6$ hours &  $>\,6$ hours & $102$ sec. \\  \hline
6.9 & $64$ sec.  & $>\,6$ hours &  $>\,6$ hours & $6293$ sec. \\  \hline
6.10 & $7$ sec.  & $>\,6$ hours &  $>\,6$ hours & $>\,6$ hours \\ \hline
6.11 & $32$ sec.  & $>\,6$ hours &  $>\,6$ hours & $>\,6$ hours \\
\end{tabular}
\end{center}

\end{table}

\section{Conclusions and discussions}
\label{sc:con}

This paper discusses how to solve the saddle point problem of polynomials.
We propose an algorithm
(i.e., Algorithm~\ref{alg:pop:kt}) for computing saddle points.
The Lasserre type semidefinite relaxations are used to solve
the polynomial optimization problems. Under some genericity assumptions,
the proposed algorithm can compute a saddle point
if there exists one. If there does not exist a saddle point,
the algorithm can detect the nonexistence.
However, we would like to remark that
Algorithm~\ref{alg:pop:kt} can always be applied,
no matter whether the defining polynomials are generic or not.
The algorithm needs to solve semidefinite programs
for Lasserre type relaxations. Since semidefinite programs
are usually solved numerically (e.g., by {\tt SeDuMi}),
the computed solutions by Algorithm~\ref{alg:pop:kt}
are correct up to numerical errors.
If the computed solutions are not accurate enough,
classical nonlinear optimization methods can be applied to improve the accuracy.
The method given in this paper can be used
to solve saddle point problems from broad applications,
such as zero sum games, min-max optimization and robust optimization.

If the polynomials are such that the set
$\mc{K}_0$ is infinite, then the convergence of
Algorithm~\ref{alg:pop:kt} is not theoretically guaranteed.
For future work, the following questions are important and interesting.

\begin{question}
When $F,g,h$ are generic,
what is an accurate (or sharp) upper bound for the number of iterations
required by Algorithm~\ref{alg:pop:kt} to terminate?
What is the complexity of Algorithm~\ref{alg:pop:kt}
for generic $F,g,h$?
\end{question}

Theorem~\ref{thm:iter:bound} gives an upper bound
for the number of iterations. However, the bound
given in \reff{upbd:M} is certainly not sharp.
Beyond the number of iterations, the complexity of
solving the polynomial optimization problems
\reff{min:F(x,y*)}, \reff{max:F(x*,y)} and \reff{minF:KT:uv}
is another concern. For efficient computational performance,
Algorithms~\ref{alg:LaSDP:xy}, \ref{alg:min:F(xy*)}, \ref{alg:maxF(y):Y}
are applied to solve them. However, their complexities are mostly open,
to the best of the authors' knowledge.
We remark that Algorithms~\ref{alg:LaSDP:xy}, \ref{alg:min:F(xy*)}, \ref{alg:maxF(y):Y}
are based on the tight relaxation method in \cite{LagExp},
instead of the classical Lasserre relaxation method in \cite{Las01}.
For the method in \cite{Las01}, there is no complexity result for the worst cases, i.e.,
there exist instances of polynomial optimization such that
the method in \cite{Las01} does not terminate within finitely many steps.
The method in \cite{LagExp} always terminates within finitely many steps,
under nonsingularity assumptions on constraints,
while its complexity is currently unknown.

The finite convergence of Algorithm~\ref{alg:pop:kt} is guaranteed
if the set $\mc{K}_0 \setminus \mc{S}_a$ is finite.
If it is infinite, it may or may not have finite convergence.
If it does not, how can we get a saddle point?
The following question is mostly open for the authors.

\begin{question}
For polynomials $F,g,h$ such that
the set $\mc{K}_0 \setminus \mc{S}_a$ is not finite,
how can we compute a saddle point if it exists?
Or how can we detect its nonexistence if it does not exist?
\end{question}

When $X,Y$ are nonempty compact convex sets and the function
$F$ is convex-concave, there always exists a saddle point \cite[\S2.6]{BNO03}.
However, if one of $X,Y$ is nonconvex or if $F$ is not convex-concave,
a saddle point may, or may not, exist.
This is the case even if $F$ is a polynomial
and $X,Y$ are semialgebraic sets.
The existence and nonexistence of saddle points for SPPPs
are shown in various examples in Section~\ref{sc:num}.
However, there is a new interesting property for SPPPs.
We can write the objective polynomial $F(x,y)$ as
\[
F(x,y) \, = \, [x]_{d_1}^T G [y]_{d_2},
\]
for degrees $d_1, d_2 > 0$ and a matrix $G$
(see Section~\ref{sc:pre} for the notation $[x]_{d_1}$ and $[y]_{d_2}$).
%The matrix $G$ has the dimension $\binom{n+d_1}{d_1} \times \binom{m+d_2}{d_2}$.
Consider new variables $u:=[x]_{d_1}$, $v:=[y]_{d_2}$ and the new sets
\[
\mc{X} = \{ [x]_{d_1}:\, x \in X \}, \quad
\mc{Y} = \{ [y]_{d_2}:\, y \in Y \}.
\]
A convex moment relaxation for the SPPP is to find
$(u^*,v^*) \in \cv{\mc{X}} \times \cv{\mc{Y}}$ such that
\[
(u^*)^TGv \, \le \, (u^*)^T G (v^*) \, \le \, u^T G v^*
\]
for all $u \in \cv{\mc{X}}$, $v \in \cv{\mc{Y}}$.
(The notation $\cv{T}$ denotes the convex hull of $T$.)
When $X,Y$ are nonempty compact sets, the above $(u^*, v^*)$
always exists, because $u^TGv$ is bilinear in $(u,v)$
and $\cv{\mc{X}},\cv{\mc{Y}}$ are compact convex sets.
In particular, if such $u^*$ is an extreme point of
$\cv{\mc{X}}$ and $v^*$ is an extreme point of $\cv{\mc{Y}}$,
say, $u^*=[a]_{d_1}$ and $v^*=[b]_{d_2}$ for $a \in X, b\in Y$,
then $(a,b)$ must be a saddle point of the original SPPP.
If there is no saddle point $(u^*,v^*)$ such that
$u^*,v^*$ are both extreme,
the original SPPP does not have saddle points.
We refer to \cite{LarLas12} for related work about this kind of problems.

\end{document}